\documentclass{jcmlatex}
\usepackage{subfigure}
\usepackage{epstopdf}
\usepackage{algorithm}
\usepackage{xcolor}

\def\JCMvol{xx}
\def\JCMno{x}
\def\JCMyear{200x}
\def\JCMreceived{xxx}
\def\JCMrevised{xxx}
\def\JCMaccepted{xxx}
\def\JCMonline{}
\begin{document}
	
	\markboth{Xie Yixin, Liu Jin-Peng, Sun Cong, Yuan Ya-Xiang}{New gradient methods with 3 dimensional quadratic termination}
	\title{NEW GRADIENT METHODS WITH 3 DIMENSIONAL QUADRATIC TERMINATION
	}
	
	\author{
		Yixin Xie
		\thanks{School of Mathematical Sciences, Beijing University of Posts and Telecommunications (BUPT);\\
		Key Laboratory of Mathematics and Information Networks (BUPT) Ministry of Education,
		Beijing 100876, China			 \\ 
		Email: xieyixin@bupt.edu.cn
	}
	\and
	Jin-Peng Liu
	\thanks{Yau Mathematical Sciences Center, Tsinghua University, Beijing 100084, China;\\
		Yanqi Lake Beijing Institute of Mathematical Sciences and Applications, Beijing 100407, China,	\\ 
		Email: liujinpeng@mail.tsinghua.edu.cn
	}
	\and
	Cong Sun\footnote{Corresponding author}
	\thanks{School of Mathematical Sciences, Beijing University of Posts and Telecommunications;\\
		Key Laboratory of Mathematics and Information Networks (BUPT) Ministry of Education, Beijing 100876, China\\
		Email: suncong86@bupt.edu.cn}
	\and
	Ya-Xiang Yuan
	\thanks{State Key Laboratory of Scientific/Engineering Computing, Institute of Computational Mathematics and Scientific/Engineering Computing, Academy of Mathematics and System Sciences, ChineseAcademy of Sciences, Beijing 100190, China
		\\ Email: yyx@lsec.cc.ac.cn}		
}

	\maketitle
	
	\begin{abstract}
		A new stepsize for gradient method is proposed. Combining it with the exact line search stepsizes, the gradient method achieves the optimal solution in $5$ steps for $3$ dimensional quadratic \nobreak function \nobreak minimization problem. The new stepsize is plugged in the cyclic stepsize update \nobreak strategy, and a new gradient method is proposed. By applying the quadratic interpolation for Cauchy approximation, the proposed gradient method is extended to solve general unconstrained problem. With the improved GLL line search, the global convergence of the \nobreak proposed method is proved. \nobreak Furthermore, its sublinear convergence rate for convex \nobreak problems and $R$-linear convergence rate for problems with quadratic functional growth property are analyzed. Numerical results show that our proposed \nobreak algorithm enjoys good performances in terms of computational cost, and line search requires very few trial \nobreak stepsizes.
	\end{abstract}
	
	\begin{classification}
		65K05, 90C30.
	\end{classification}
	
	\begin{keywords}
		gradient method, finite termination, quadratic interpolation, linear convergence rate
	\end{keywords}

	\section{Introduction}\label{section1}
Gradient method is an important algorithm for large scale optimization \nobreak problems, which appear in many applications such as machine learning \cite{machine2}, weather forecasting \cite{weather2015} and inverse problems \cite{sun2020inverse}. Consider the unconstrained optimization problem:
\begin{equation}\label{f1.1}
	\min_{\mathbf{x}\in\mathbb{R}^{n}}\ f(\mathbf{x}),
\end{equation}
where $f:\mathbb{R}^n\mapsto\mathbb{R}$ is a continuously differentiable and smooth function. The iteration formula of gradient method is:
\begin{equation*}
	\mathbf{x}_{k+1}=\mathbf{x}_{k}-\alpha_{k}\mathbf{g}_{k},
\end{equation*}
where $\mathbf{g}_{k}=\nabla f(\mathbf{x}_{k})$ is the gradient vector at the $k$-th iteration point $\mathbf{x}_k$ and $\alpha_k$ is the stepsize. As in the line search framework, the search direction of gradient method is fixed as the negative gradient, and stepsize is the only tuning parameter. Yet it plays a very important role, which has great influence on the efficiency of gradient method.

Many stepsize strategies have been proposed in the literature. The classic steepest descent (SD) method, proposed by Cauchy\cite{Cauchy1847}, considers the exact line search stepsize:
\begin{equation*}
	\alpha_{k}^{SD}=\mathop{\arg\min}_{\alpha>0}\ f(\mathbf{x}_{k}-\alpha \mathbf{g}_{k}),
\end{equation*}	
which is also called the Cauchy stepsize. For quadratic function
\begin{equation}\label{quadraticf} f(\mathbf{x})=\frac{1}{2}\mathbf{x}^{T}\mathbf{H}\mathbf{x}+\mathbf{b}^{T}\mathbf{x},
\end{equation}
the Cauchy stepsize has the following closed-form solution:
\begin{equation}\label{Cauchy}
	\alpha_{k}^{SD}=\frac{\mathbf{g}_{k}^{T}\mathbf{g}_{k}}{\mathbf{g}_{k}^{T}\mathbf{H}\mathbf{g}_{k}},
\end{equation}		
where $\mathbf{H}\in\mathbb{R}^{n\times n}$ is a symmetric positive definite matrix. It has been proved that the SD method converges $Q$-linearly \cite{Akaike1959}. However, its performance highly depends on the condition number of the problem. The SD method can be very slow, and the zigzag phenomenon is commonly observed when the condition number is large. In recent years, many variations of SD have been proposed \cite{Andreani2020},\cite{di2020}.

In\cite{BB1988}, two stepsizes are proposed by Barzilai and Borwein based on the quasi-Newton property, which are named as BB stepsizes:
\begin{equation}
	\alpha_{k}^{BB1}=\frac{\mathbf{s}_{k-1}^{T}\mathbf{s}_{k-1}}{\mathbf{s}_{k-1}^{T}\mathbf{y}_{k-1}},\quad \alpha_{k}^{BB2}=\frac{\mathbf{s}_{k-1}^{T}\mathbf{y}_{k-1}}{\mathbf{y}_{k-1}^{T}\mathbf{y}_{k-1}},
\end{equation}
where $\mathbf{s}_{k-1}=\mathbf{x}_{k}-\mathbf{x}_{k-1}$ and $\mathbf{y}_{k-1}=\mathbf{g}_{k}-\mathbf{g}_{k-1}$. Remarkably, the BB method is the first gradient method that has superlinear convergence rate for minimizing $2$ dimensional quadratic functions. It has many improved variants. The most widely used and efficient one is ABBmin \cite{ABBmin,ABBmin19}. The stepsize is updated as
\begin{equation*}
	\alpha_{k}^{ABBmin}=
	\left\{
	\begin{aligned}
		\min\{\alpha_{j}^{BB2}|j=\max\{1,k-m\},\cdots,k\}& , & \text{if}\quad \frac{\alpha_{k}^{BB2}}{\alpha_{k}^{BB1}}<\tau_{1};\\
		\alpha_{k}^{BB1}& , & \text{otherwise}
	\end{aligned}
	\right.
\end{equation*}
with $\tau_{1}\in(0,1)$. Essentially, it uses short BB stepsize and switches to the long BB stepsize occasionally. ABBmin is extended in the multipoint stepsize gradient (MPSG) method \cite{Huang2022}, where the difference between two consecutive points is extended to the difference between multiple points.
\begin{equation}\label{MPSG}
	\alpha_{k}^{MPSG}=
	\left\{
	\begin{aligned}
		\min\{\alpha_{k-1}^{MDF2},\alpha_{k}^{MDF2}\}& , && \text{if}\quad  \frac{\alpha_{k}^{MDF2}}{\alpha_{k}^{MDF1}}<\tau_{2};\\
		\alpha_{k}^{MDF1}& , && \text{otherwise},
	\end{aligned}
	\right.
\end{equation}
where $\alpha_k^{MDF1}=\frac{\sum_{i=1}^{m}\mathbf{s}_{k-i}^T\mathbf{s}_{k-i}}{\sum_{i=1}^{m}|\mathbf{s}_{k-i}^T\mathbf{y}_{k-i}|}$, $\alpha_k^{MDF2}=\frac{\sum_{i=1}^{m}|\mathbf{s}_{k-i}^T\mathbf{y}_{k-i}|}{\sum_{i=1}^{m}\mathbf{s}_{k-i}^T\mathbf{s}_{k-i}}$ and $\tau_{2}\in(0,1)$.

In \cite{Yuan2006}, Yuan proposes a new stepsize, which enjoys $3$-step termination for $2$ dimensional quadratic function minimization problems. The iteration formula is as follows:
\begin{equation}\label{2d2p}
	\left\{
	\begin{aligned}
		\mathbf{x}_1&=\mathbf{x}_0-\alpha_{0}^{SD}\mathbf{g}_0,\\
		\mathbf{x}_2&=\mathbf{x}_1-\alpha_1^Y\mathbf{g}_1,\\
		\mathbf{x}^*&=\mathbf{x}_2-\alpha_{2}^{SD}\mathbf{g}_2,
	\end{aligned}
	\right.
\end{equation}
where $\mathbf{x}^*$ is the optimal solution. The Yuan stepsize
\begin{equation}\label{Y}
	\alpha_{k}^{Y}=2\Big(\sqrt{(\frac{1}{\alpha_{k-1}^{SD}}-\frac{1}{\alpha_{k}^{SD}})^{2}+\frac{4\Vert \mathbf{g}_{k}\Vert_{2}^{2}}{\Vert\mathbf{s}_{k-1}\Vert^{2}_{2}}}+\frac{1}{\alpha_{k-1}^{SD}}+\frac{1}{\alpha_{k}^{SD}}\Big)^{-1}
\end{equation}
is carefully designed through coordinate transformation. The following variant form of the Yuan stepsize is shown in \cite{DaiYuan2005}, as for the numerical stability:
\begin{equation}\label{YV}
	\alpha_{k}^{YV}=2\Big(\sqrt{(\frac{1}{\alpha_{k-1}^{SD}}-\frac{1}{\alpha_{k}^{SD}})^{2}+\frac{4\Vert \mathbf{g}_{k}\Vert_{2}^{2}}{(\alpha_{k-1}^{SD}\Vert \mathbf{g}_{k-1}\Vert_{2}^{2})^{2}}}+\frac{1}{\alpha_{k-1}^{SD}}+\frac{1}{\alpha_{k}^{SD}}\Big)^{-1}.
\end{equation}
In the iteration process $(\ref{2d2p})$, it is equivalent as $\alpha_k^Y$, but would be different in other occations.

Motivated by Yuan method, the SDC method combines several Cauchy stepsizes and Yuan stepsize to update stepsize in a cyclic way \cite{Asmundis2014}. In each cycle, it first applies $h(\geq2)$ Cauchy stepsizes, and then uses the Yuan stepsize as the fixed stepsize for $m$ iterations. The stepsize update strategy is
\begin{equation}\label{SDC}
	\alpha_{k}^{SDC}=
	\left\{
	\begin{aligned}
		\alpha_{k}^{SD}& , && \text{if} \mod(k,h+m)<h;\\
		\alpha_{k}^{YV}& , && \text{if} \mod(k,h+m)=h;\\
		\alpha_{k-1}& , && \text{otherwise}.
	\end{aligned}
	\right.
\end{equation}
Another SL framework of the cyclic gradient method is proposed in \cite{Sun2020}, where each loop consists of $2$ Cauchy stepsizes and several gradient stepsizes with fixed stepsize:
\begin{equation}\label{SL}
	\alpha_{k}^{SL}=
	\left\{
	\begin{aligned}
		\alpha_{k}^{SD}& , && \text{if} \mod(k,T)<2;\\
		\alpha_{k-1}^{F}& , && \text{if} \mod(k,T)=2;\\
		\alpha_{k-1}& , && \text{otherwise}.
	\end{aligned}
	\right.
\end{equation}
Here, the authors offer four choices for $\alpha_{k}^{F}$:
\begin{equation*}
	\alpha_{k}^{YV},\quad \tilde{\alpha}_{k}=\Big(\frac{1}{\alpha_{k-1}^{SD}}+\frac{1}{\alpha_{k}^{SD}}\Big)^{-1}, \alpha_{k}^{F\min}=\min\Big\{\alpha_{k-1}^{SD},\alpha_{k}^{SD}\Big\},\quad \alpha_{k}^{F\max}=\max\Big\{\alpha_{k-1}^{SD},\alpha_{k}^{SD}\Big\}.
\end{equation*}
All the candidate fixed stepsizes are computed through the two consecutive Cauchy stepsizes. Note that the fixed stepsize is calculated with the information of the previous ($(k-1)$-th) iteration rather than that of the current ($k$-th) iteration. In this case, only two Cauchy stepsizes are required in each loop. They are computed and used in the first two iterations; then the fixed stepsize is obtained through the already computed two Cauchy stepsizes, and is used for the rest $T-2$ iterations in the loop. Thus its average computational cost is very low. Both SDC and SL methods are designed for problems with quadratic objective functions, and they both have $R$-linear convergence rate. 

Subsequently, the SL method was extended to general unconstrained optimization problems \cite{Zhangya2022, ACINP}. The main difficulty for the extension is that Cauchy stepsize does not have closed form for general problem. In \cite{Zhangya2022}, the BB stepsize in the next iteration is used for the approximation of the Cauchy stepsize in the current iteration, as they are equivalent for the case of quadratic objective function. Another approximation for the Cauchy stepsize is proposed in \cite{ACINP}, which makes use of the quadratic interpolation idea. The cyclic stepsize update strategies perform well numerically and the corresponding gradient methods enjoy low computational cost per iteration. 

Clearly the Yuan stepsize is an efficient stepsize which enjoys finite termination for $2$ dimensional quadratic function minimization. The aforementioned cyclic gradient methods are based or partly based on it. In this work a new stepsize for the gradient method is proposed, which has finite termination property for $3$ dimensional quadratic function minimization problems. It can be treated as the extension of Yuan stepsize, and is degenerated into Yuan stepsize in the $2$ dimensional subspace. In Section \ref{section2}, a new stepsize is derived which ensures $5$-step termination for $3$ dimensional quadratic minimization problems. The proposed stepsize is used in the cyclic gradient method framework, and the method is generalized to general problem in Section \ref{section3}. The theoretical analysis for the proposed gradient method is established, including the global convergence and convergence rates for different types of functions. Numerical experiments comparing the proposed method with other state-of-the-art algorithms are presented in Section \ref{section4}. Finally, the paper is concluded in Section \ref{section5}.
	
	\section{New Stepsize with $3$ Dimensional $5$-step Termination}\label{section2}
	
	In this section, a new stepsize is proposed for gradient method, with the motivation of finite termination of $3$ dimensional quadratic function problems.
	
	Recall that Yuan stepsize $(\ref{Y})$ enjoys $3$-step termination for $2$ dimensional quadratic function \nobreak problems. Now we consider the following $3$ dimensional problem:
	\begin{equation}\label{3dimfb0}
		\min_{\mathbf{x}\in\mathbb{R}^{3}}f(\mathbf{x})=\frac
		{1}{2}\mathbf{x}^T\mathbf{H}\mathbf{x}+\mathbf{b}^T\mathbf{x},
	\end{equation}
	where the Hessian matrix $\mathbf{H}\in\mathbb{R}^{3\times3}$ is symmetric and positive definite. We would like to use gradient method to solve it in $5$ steps. The following iteration process is considered:
	\begin{equation}\label{2dim5}
		\left\{
		\begin{aligned}
			\mathbf{x}_{1}&=\mathbf{x}_{0}-\alpha_{0}^{SD}\mathbf{g}_{0},\\
			\mathbf{x}_{2}&=\mathbf{x}_{1}-\alpha_{1}^{SD}\mathbf{g}_{1},\\
			\mathbf{x}_{3}&=\mathbf{x}_{2}-\alpha_{2}\mathbf{g}_{2},\\
			\mathbf{x}_{4}&=\mathbf{x}_{3}-\alpha_{3}\mathbf{g}_{3},\\
			\mathbf{x}^*&=\mathbf{x}_{4}-\alpha_{4}^{SD}\mathbf{g}_{4},
		\end{aligned}
		\right.
	\end{equation}
	where $\mathbf{x}^*$ is the optimal solution of $(\ref{3dimfb0})$. In our design, the first two iterations apply the Cauchy stepsizes:
	\begin{equation*}
		\alpha_0^{SD}=\frac{\mathbf{g}_0^T\mathbf{g}_0}{\mathbf{g}_0^T\mathbf{H}\mathbf{g}_0},\quad \alpha_1^{SD}=\frac{\mathbf{g}_1^T\mathbf{g}_1}{\mathbf{g}_1^T\mathbf{H}\mathbf{g}_1}.
	\end{equation*}
 After that the two stepsizes $\alpha_2$ and $\alpha_3$ require specific design. For the last iteration the Cauchy stepsize is used again.
	
	Since the first two steps are exact line search steps, we have that $\mathbf{g}_0\perp \mathbf{g}_{1}$ and $\mathbf{g}_{1}\perp \mathbf{g}_{2}$. Suppose $\mathbf{g}_{0}$ and $\mathbf{g}_{2}$ are not parallel. Based on $\mathbf{g}_{0}$, $\mathbf{g}_{1}$ and $\mathbf{g}_{2}$, a new rectangular coordinate system can be built up. Applying the Schmidt orthogonalization, we generate $\tilde{\mathbf{g}}_{2}$ to be mutually orthogonal with $\mathbf{g}_0$ and $\mathbf{g}_1$:
	\begin{equation}
		\tilde{\mathbf{g}}_{2}=\mathbf{g}_{2}-\frac{\mathbf{g}_{2}^{T}\mathbf{g}_{0}}{\Vert\mathbf{g}_{0}\Vert^{2}}\mathbf{g}_{0}.
	\end{equation}
	For any $\mathbf{x}\in\mathbb{R}^3$, it can be expressed as
	\begin{equation*}
		\mathbf{x}=\mathbf{x}_2+u\frac{\mathbf{g}_{0}}{\Vert\mathbf{g}_{0}\Vert}+v\frac{\mathbf{g}_{1}}{\Vert\mathbf{g}_{1}\Vert}+w\frac{\tilde{\mathbf{g}}_{2}}{\Vert\tilde{\mathbf{g}}_{2}\Vert}=\mathbf{x}_2+\mathbf{Q}\mathbf{y},
	\end{equation*}
where $\mathbf{Q}=\Big(\frac{\mathbf{g}_{0}}{\Vert\mathbf{g}_{0}\Vert}, \frac{\mathbf{g}_{1}}{\Vert\mathbf{g}_{1}\Vert},\frac{\tilde{\mathbf{g}}_{2}}{\Vert\tilde{\mathbf{g}}_{2}\Vert} \Big)$, and $\mathbf{y}=(u,v,w)^{T}$ is the new coordinate for $\mathbf{x}$ in the new coordinate system. The objective function is equivalent to
	\begin{equation}
		\begin{aligned}
			f(\mathbf{x})&=f(\mathbf{x}_2+\mathbf{Q}\mathbf{y})=\frac{1}{2}(\mathbf{x}_2+\mathbf{Q}\mathbf{y})^T\mathbf{H}(\mathbf{x}_2+\mathbf{Q}\mathbf{y})+\mathbf{b}^T(\mathbf{x}_2+\mathbf{Q}\mathbf{y})\\
			&=f(\mathbf{x}_2)+\mathbf{a}^T\mathbf{y}+\frac{1}{2}\mathbf{y}^T\mathbf{A}\mathbf{y}\\
			&\triangleq h(\mathbf{y}).
		\end{aligned}
	\end{equation}
	Here $\mathbf{a}=\mathbf{Q}^T\mathbf{g}_2=\Big(\frac{\mathbf{g}_2^T\mathbf{g}_0}{\Vert\mathbf{g}_0\Vert},0,\Vert\tilde{\mathbf{g}}_2\Vert\Big)^T$, and
	\begin{equation}\label{Q_a33}
		\mathbf{A}=\mathbf{Q}^T\mathbf{H}\mathbf{Q}\triangleq (a_{ij})_{3\times 3}=
		\begin{pmatrix}
			\frac{1}{\alpha_0^{SD}} & -\sqrt{\beta\gamma} & 0\\
			-\sqrt{\beta\gamma}  & \frac{1}{\alpha_1^{SD}} & -\sqrt{\beta(1-\gamma)}\\
			0 & -\sqrt{\beta(1-\gamma)} & a_{33}
		\end{pmatrix},
	\end{equation}
where
\begin{equation*}
		\beta=\frac{\Vert\mathbf{g}_2\Vert^2}{(\alpha_1^{SD})^2\Vert\mathbf{g}_1\Vert^2},\quad
		\gamma=\frac{(\mathbf{g}_2^T\mathbf{g}_0)^2}{\Vert\mathbf{g}_0\Vert^2\Vert\mathbf{g}_2\Vert^2},\quad a_{33}=\frac{\tilde{\mathbf{g}}_{2}\mathbf{H}\tilde{\mathbf{g}}_{2}}{\Vert\tilde{\mathbf{g}}_{2}\Vert^2}=\frac{\frac{1}{\alpha_2^{SD}}-\frac{1}{\alpha_0^{SD}}\gamma}{1-\gamma}.
\end{equation*}
	The iteration points and their gradients can be expressed by the new coordinates in the new coordinate system. Table \ref{table1} lists the $3$ dimensional vectors and their new coordinates.
	\begin{table}[h!]
		\begin{center}
			\caption{old and new coordinates.} \label{table1}
			\begin{tabular}{|c|c|}
				\hline
				\textbf{original coordinates ($\mathbf{x}$)} & \textbf{new coordinates ($\mathbf{y}$)} \\
				\hline
				$\mathbf{x}^*$ & $\mathbf{y}^*=-\mathbf{A}^{-1}\mathbf{a}$ \\
				\hline
				$\mathbf{x}_3=\mathbf{x}_2-\alpha_2\mathbf{g}_2$ & $\mathbf{y}_3=-\alpha_2\mathbf{Q}^T\mathbf{g}_2=-\alpha_2\mathbf{a}$ \\
				$\mathbf{g}_3=\nabla f(\mathbf{x}_3)$ & $\mathbf{g}_{\mathbf{y}_3}=\nabla h(\mathbf{y}_3)=\mathbf{A}\mathbf{y}_3+\mathbf{a}=(\mathbf{I}-\alpha_2\mathbf{A})\mathbf{a}$\\
				\hline
				$\mathbf{x}_4=\mathbf{x}_3-\alpha_3\mathbf{g}_3$ & $\mathbf{y}_4=[\alpha_2\alpha_3\mathbf{A}-(\alpha_2+\alpha_3)\mathbf{I}]\mathbf{a}$ \\
				$\mathbf{g}_4=\nabla f(\mathbf{x}_4)$ & $\mathbf{g}_{\mathbf{y}_4}=\nabla h(\mathbf{y}_4)=[\alpha_2\alpha_3\mathbf{A}-(\alpha_2+\alpha_3)\mathbf{I}+\mathbf{I}]\mathbf{a}$\\
				\hline
			\end{tabular}
		\end{center}
	\end{table}

To satisfy the $5$ step termination, we need the gradient direction $\mathbf{g}_{\mathbf{y}_k}$ to be parallel to the residual vector $\mathbf{y}^*-\mathbf{y}_4$. With the new coordinates, we have
\begin{equation*}
	\mathbf{y}^*-\mathbf{y}_4=-\alpha_4^{SD}\mathbf{g}_{\mathbf{y}_4}.
\end{equation*}
Plugging in the expressions of the new coordinates, we further have
\begin{equation*}
	-\mathbf{A}^{-1}\mathbf{a}-[\alpha_2\alpha_3\mathbf{A}-(\alpha_2+\alpha_3)\mathbf{I}]\mathbf{a}=-\alpha_4^{SD}[\alpha_2\alpha_3\mathbf{A}^2-(\alpha_2+\alpha_3)\mathbf{A}+\mathbf{I}]\mathbf{a}.
\end{equation*}
Through equivalent transformation, we can deduce that
\begin{equation}\label{eq*}
	(\mathbf{I} - \alpha_2 \mathbf{A})(\mathbf{I} - \alpha_3 \mathbf{A})(\mathbf{I} - \alpha_4^{SD} \mathbf{A})\mathbf{a}= \mathbf{0}.
\end{equation}
Since $\mathbf{A}=\mathbf{Q}^T\mathbf{H}\mathbf{Q}$ is symmetric positive definite, it can be diagonalized as
\begin{equation}\label{3dq2}
	\mathbf{A} = \mathbf{U}\Lambda\mathbf{U}^T.
\end{equation}
here $\mathbf{U}$ is an orthogonal matrix and $\boldsymbol{\Lambda} = \text{Diag}(\mu_1, \mu_2, \mu_3)$, where the diagonal elements are the eigenvalues of $\mathbf{A}$ with $\mu_1\geq \mu_2\geq \mu_3>0$. Taking (\ref{3dq2}) into (\ref{eq*}), we have the following nonlinear equations with respect to $\alpha_2, \alpha_3$ and $\alpha_4^{SD}$:
\begin{equation}\label{al_lam}
	\left\{
	\begin{aligned}
		(\alpha_2\mu_1-1)(\alpha_3\mu_1-1)(\alpha_4^{SD}\mu_1-1)&=0;\\
		(\alpha_2\mu_2-1)(\alpha_3\mu_2-1)(\alpha_4^{SD}\mu_2-1)&=0;\\
		(\alpha_2\mu_3-1)(\alpha_3\mu_3-1)(\alpha_4^{SD}\mu_3-1)&=0.
	\end{aligned}
	\right.
\end{equation}
It is observed that as long as $\alpha_2, \alpha_3$ and $\alpha_4^{SD}$ are the inverse of the eigenvalues of $\mathbf{A}$, the equations (\ref{al_lam}) hold. So next we will calculate the eigenvalues of $\mathbf{A}$, and use their inverse to design new stepsizes. Through the eigenvalue equation of $\mathbf{A}$:
\begin{equation}
	|\mu \mathbf{I} - \mathbf{A}| = 0,
\end{equation}
	we have the following cubic equation:
	\begin{equation}\label{2.9}
		\mu^3-t_1\mu^2+t_2\mu-t_3=0,
	\end{equation}
	where
	\begin{equation}\label{t}
		\left\{
		\begin{aligned}
			t_1&=\frac{1}{\alpha_0^{SD}}+\frac{1}{\alpha_1^{SD}}+a_{33};\\
			t_2&=\frac{1}{\alpha_0^{SD}\alpha_1^{SD}}+(\frac{1}{\alpha_0^{SD}}+\frac{1}{\alpha_1^{SD}})a_{33}-\beta;\\
			t_3&=\frac{1}{\alpha_0^{SD}\alpha_1^{SD}}a_{33}-\frac{1}{\alpha_0^{SD}}\beta(1-\gamma)-a_{33}\beta\gamma.
		\end{aligned}
		\right.
	\end{equation}
	According to Cardano's formula, the three roots are
	\begin{equation}\label{mu}
		\begin{split}
			\mu_{1}&=\frac{t_{1}}{3}+2\sqrt{-\frac{p}{3}}\cos\Big(\frac{1}{3}\arccos\big(\frac{3q}{2p}\sqrt{\frac{-3}{p}}\big)\Big),\\
			\mu_{2}&=\frac{t_{1}}{3}+2\sqrt{-\frac{p}{3}}\cos\Big(\frac{1}{3}\arccos\big(\frac{3q}{2p}\sqrt{\frac{-3}{p}}\big)-\frac{2\pi}{3}\Big),\\
			\mu_{3}&=\frac{t_{1}}{3}+2\sqrt{-\frac{p}{3}}\cos\Big(\frac{1}{3}\arccos\big(\frac{3q}{2p}\sqrt{\frac{-3}{p}}\big)+\frac{2\pi}{3}\Big),
		\end{split}
	\end{equation}
	where $p=t_2-\frac{t_1^2}{3}$ and $q=-\frac{2t_1^3}{27}+\frac{t_1t_2}{3}-t_3$. Thus we obtain the three eigenvalues of A as in (\ref{mu}), and three new stepsizes as
	\begin{equation}\label{NYsteps}
		\alpha^{NY(i)}=\mu_{i}^{-1},\quad i=1,2,3.
	\end{equation}
Their relationship is shown in the following theorem.

\begin{theorem} \label{thm:stepsize_properties}
	The stepsizes $\alpha^{NY(i)}$ ($i=1,2,3$) defined in $(\ref{NYsteps})$ satisfy the following order:
	\begin{equation}
		\alpha^{NY(1)} \leq \alpha^{NY(2)} \leq \alpha^{NY(3)}.
	\end{equation}
\end{theorem}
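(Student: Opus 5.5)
The claim reduces to showing that the three Cardano expressions in (\ref{mu}) satisfy $\mu_1\ge\mu_2\ge\mu_3>0$. Since $\alpha^{NY(i)}=\mu_i^{-1}$ and $x\mapsto x^{-1}$ is decreasing on $(0,\infty)$, the reversed order of the stepsizes then follows at once. Positivity is immediate: $\mathbf{A}=\mathbf{Q}^T\mathbf{H}\mathbf{Q}$ with $\mathbf{Q}$ orthogonal and $\mathbf{H}$ symmetric positive definite, so $\mathbf{A}$ is symmetric positive definite and all roots of (\ref{2.9}) are real and positive. The work is to check that the labels in (\ref{mu}) really produce the roots in decreasing order.

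First I will justify that the trigonometric form is well defined. Because (\ref{2.9}) has three real roots, its depressed form $z^3+pz+q=0$, with $\mu=z+t_1/3$, has nonpositive discriminant, $4p^3+27q^2\le 0$. If $p=0$ this forces $q=0$, so all roots equal $t_1/3$ and the ordering is trivial; I will treat this degenerate case separately. Otherwise $p<0$, and the discriminant condition is equivalent to
\[
\Big|\frac{3q}{2p}\sqrt{\frac{-3}{p}}\Big|\le 1 ,
\]
so $\theta:=\frac13\arccos\big(\frac{3q}{2p}\sqrt{-3/p}\big)\in[0,\pi/3]$ is well defined. As a consistency check, the standard identity $4\cos^3\theta-3\cos\theta=\cos3\theta$ shows that $z=2\sqrt{-p/3}\cos(\theta+2k\pi/3)$, $k=0,\pm1$, are indeed the three roots.

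Next I compare the cosines. For $\theta\in[0,\pi/3]$ the three angles satisfy
\[
\theta\in[0,\pi/3],\qquad \theta-\tfrac{2\pi}{3}\in[-\tfrac{2\pi}{3},-\tfrac{\pi}{3}],\qquad \theta+\tfrac{2\pi}{3}\in[\tfrac{2\pi}{3},\pi].
\]
Hence $\cos\theta\ge 1/2$, $\cos(\theta-2\pi/3)\in[-1/2,1/2]$, and $\cos(\theta+2\pi/3)\in[-1,-1/2]$. For the middle and last terms a sharper comparison is available: with $\cos(\theta\mp2\pi/3)=\cos(2\pi/3\mp\theta)$, the inequality $\cos(2\pi/3-\theta)\ge\cos(2\pi/3+\theta)$ holds because $\cos$ is decreasing on $[0,\pi]$ and $2\pi/3-\theta\le 2\pi/3+\theta\le\pi$. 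Thus
\[
\cos\theta\ \ge\ \cos(\theta-2\pi/3)\ \ge\ \cos(\theta+2\pi/3).
\]
Multiplying by $2\sqrt{-p/3}>0$ and adding $t_1/3$ gives $\mu_1\ge\mu_2\ge\mu_3$. This matches the ordering assumed in (\ref{3dq2}), and taking reciprocals yields the theorem.

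The main obstacle is only the well-definedness step. I need to confirm that real-rootedness, which comes from the symmetry of $\mathbf{A}$, implies $p\le0$ and $|\cdot|\le1$ inside the $\arccos$, including the boundary cases of repeated eigenvalues. I plan to handle this through the discriminant identity $-(4p^3+27q^2)=\prod_{i<j}(\mu_i-\mu_j)^2\ge0$, which gives $p\le0$ directly since $p=-\frac{1}{6}\sum_{i<j}(\mu_i-\mu_j)^2$.
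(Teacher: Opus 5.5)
Your proof is correct and follows the paper's argument. Both place $\theta$ in $[0,\pi/3]$, locate the three phase angles in $[0,\pi/3]$, $[-2\pi/3,-\pi/3]$ and $[2\pi/3,\pi]$, and use monotonicity of cosine to get $\mu_1\ge\mu_2\ge\mu_3$ before taking reciprocals. Your extra steps fill in details the paper leaves implicit: the discriminant argument giving $p\le 0$ and an $\arccos$ argument in $[-1,1]$, the separate $p=0$ case, and positivity of each $\mu_i$ as a root of the positive definite $\mathbf{A}$, which is what lets the inequalities reverse under inversion. Your ``sharper'' comparison of the last two cosines is redundant, since $\cos(\theta+2\pi/3)\le -\frac{1}{2}\le\cos(\theta-2\pi/3)$ already follows from the interval bounds.
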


\begin{proof}
	Since $\arccos\left(\frac{3q}{2p}\sqrt{-\frac{3}{p}}\right)$ belongs to the set $[0,\pi]$, it is easy to see that
	\begin{equation*}
		\phi = \frac{1}{3}\arccos\left(\frac{3q}{2p}\sqrt{-\frac{3}{p}}\right)\in [0, \frac{\pi}{3}].
	\end{equation*}
	 Consequently, the three phase angles involved in $(\ref{mu})$ satisfy:
	\begin{equation*}
		\phi \in \left[0, \frac{\pi}{3}\right], \quad 
		\phi - \frac{2\pi}{3} \in \left[-\frac{2\pi}{3}, -\frac{\pi}{3}\right], \quad 
		\phi + \frac{2\pi}{3} \in \left[\frac{2\pi}{3}, \pi\right].
	\end{equation*}
	
	Based on the monotonicity of the cosine function in these intervals, we have:
	\begin{equation*}
		-1 \leq \cos\left(\phi + \frac{2\pi}{3}\right) \leq -\frac{1}{2}\leq \cos\left(\phi - \frac{2\pi}{3}\right)\leq \frac{1}{2}\leq\cos\phi \leq 1.
	\end{equation*}
	Thus we can deduce that 
	\begin{equation*}
		\frac{t_1}{3} - 2\sqrt{-\frac{p}{3}}\leq\mu_3 \leq\frac{t_1}{3} - \sqrt{-\frac{p}{3}}\leq \mu_2 \leq\frac{t_1}{3} + \sqrt{-\frac{p}{3}}\leq \mu_1\leq\frac{t_1}{3} + 2\sqrt{-\frac{p}{3}}.
	\end{equation*}
	For the stepsizes which are the inverse of the eigenvalues, their orders are 
	\begin{equation*}
		\alpha^{NY(1)} \leq \alpha^{NY(2)} \leq \alpha^{NY(3)}.\tag*{$\square$}
	\end{equation*}
\end{proof}

When we build up the new coordinate system, it is assumed that  $\mathbf{g}_0$, $\mathbf{g}_1$ and $\mathbf{g}_2$ are linearly independent. That is, $\mathbf{g}_0$ and $\mathbf{g}_2$ are supposed to be nonparallel. The next theorem reveals that the proposed stepsize is indeed an extension of Yuan stepsize (\ref{Y}), and degenerates into Yuan stepsize in $2$ dimensional space.

\begin{theorem} \label{thm:geometric_properties}
	If $\mathbf{g}_0$ is parallel to $\mathbf{g}_2$, the proposed stepsize $\alpha^{NY(1)}$ degenerates to the Yuan stepsize $\alpha^{Y}$.
\end{theorem}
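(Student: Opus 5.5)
The plan is to specialize the construction of $\mathbf{A}$ in (\ref{Q_a33}) to the degenerate case $\mathbf{g}_2\parallel\mathbf{g}_0$. In that case we show the characteristic polynomial (\ref{2.9}) factors into a $2\times 2$ block plus a decoupled third direction. We then identify $\mu_1$ with the larger root of the quadratic factor and compare its inverse with (\ref{Y}) at $k=2$.

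\textbf{Step 1 (the degenerate parameters).} If $\mathbf{g}_2=c\,\mathbf{g}_0$, then $\gamma=\frac{(\mathbf{g}_2^T\mathbf{g}_0)^2}{\Vert\mathbf{g}_0\Vert^2\Vert\mathbf{g}_2\Vert^2}=1$ by the equality case of Cauchy--Schwarz, and $\tilde{\mathbf{g}}_2=\mathbf{0}$. The Rayleigh quotient depends only on the direction, so
\begin{equation*}
\frac{1}{\alpha_2^{SD}}=\frac{\mathbf{g}_2^T\mathbf{H}\mathbf{g}_2}{\mathbf{g}_2^T\mathbf{g}_2}=\frac{\mathbf{g}_0^T\mathbf{H}\mathbf{g}_0}{\mathbf{g}_0^T\mathbf{g}_0}=\frac{1}{\alpha_0^{SD}}.
\end{equation*}
Moreover $\mathbf{s}_1=-\alpha_1^{SD}\mathbf{g}_1$, so $\Vert\mathbf{s}_1\Vert^2=(\alpha_1^{SD})^2\Vert\mathbf{g}_1\Vert^2$ and therefore $\beta=\Vert\mathbf{g}_2\Vert^2/\Vert\mathbf{s}_1\Vert^2$. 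This is exactly the quantity appearing in (\ref{Y}) with $k=2$.

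\textbf{Step 2 (decoupling).} With $\gamma=1$ the entries $a_{13}=a_{31}=0$ and $a_{23}=a_{32}=-\sqrt{\beta(1-\gamma)}=0$, so $\mathbf{A}$ is block diagonal. Substituting into (\ref{t}) gives the factorization
\begin{equation*}
\mu^3-t_1\mu^2+t_2\mu-t_3=(\mu-a_{33})\Big(\mu^2-\big(\tfrac{1}{\alpha_0^{SD}}+\tfrac{1}{\alpha_1^{SD}}\big)\mu+\tfrac{1}{\alpha_0^{SD}\alpha_1^{SD}}-\beta\Big).
\end{equation*}
To verify it, expand the right-hand side and match coefficients against $t_1,t_2,t_3$ in (\ref{t}) with $\gamma=1$. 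The third coordinate $w$ carries no component of $\mathbf{a}$: since $\tilde{\mathbf{g}}_2=\mathbf{0}$, we have $\mathbf{a}=(\Vert\mathbf{g}_2\Vert,0,0)^T$ up to sign. The iteration of Table \ref{table1} therefore stays in the two-dimensional plane $\mathbf{x}_2+\mathrm{span}\{\mathbf{g}_0,\mathbf{g}_1\}$. Hence the only relevant eigenvalues in (\ref{al_lam}) are the roots of the quadratic factor:
\begin{equation*}
\mu_\pm=\frac12\Big(\frac{1}{\alpha_0^{SD}}+\frac{1}{\alpha_1^{SD}}\pm\sqrt{\big(\tfrac{1}{\alpha_0^{SD}}-\tfrac{1}{\alpha_1^{SD}}\big)^2+4\beta}\Big).
\end{equation*}

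\textbf{Step 3 (conclusion).} Take $\mu_1=\mu_+$, which is the largest relevant eigenvalue; by Theorem \ref{thm:stepsize_properties} it gives $\alpha^{NY(1)}$. Using $\alpha_0^{SD}=\alpha_2^{SD}$ and $\beta=\Vert\mathbf{g}_2\Vert^2/\Vert\mathbf{s}_1\Vert^2$, we get
\begin{equation*}
\alpha^{NY(1)}=\mu_+^{-1}=2\Big(\sqrt{\big(\tfrac{1}{\alpha_1^{SD}}-\tfrac{1}{\alpha_2^{SD}}\big)^2+\tfrac{4\Vert\mathbf{g}_2\Vert^2}{\Vert\mathbf{s}_1\Vert^2}}+\tfrac{1}{\alpha_1^{SD}}+\tfrac{1}{\alpha_2^{SD}}\Big)^{-1}=\alpha_2^{Y}.
\end{equation*}

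\textbf{Main obstacle.} The delicate point is $a_{33}$. In the degenerate case its formula is of the form $0/0$ and $\tilde{\mathbf{g}}_2$ is undefined, so Cardano's ordering could in principle place the spurious root $a_{33}$ above $\mu_+$. I would handle this by arguing that the third coordinate direction is absent: the problem is genuinely two-dimensional, $\mathbf{Q}$ reduces to its first two columns, and the cubic (\ref{2.9}) reduces to the quadratic factor. Alternatively, one could pass to the limit $\gamma\to1$ and observe that the eigenvector associated with $a_{33}$ becomes orthogonal to $\mathbf{a}$, so it plays no role in (\ref{eq*}). Under either reading, $\mu_1$ is the larger root of the quadratic, and the identification with $\alpha^Y$ follows.
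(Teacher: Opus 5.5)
Your proposal is correct and follows essentially the paper's route. Like the paper, you set $\gamma=1$, reduce (\ref{2.9}) to $(\mu-\frac{1}{\alpha_0^{SD}})(\mu-\frac{1}{\alpha_1^{SD}})-\beta=0$, and take the larger root. Your explicit splitting-off of the factor $(\mu-a_{33})$ and your checks $\alpha_0^{SD}=\alpha_2^{SD}$ and $\beta=\Vert\mathbf{g}_2\Vert^2/\Vert\mathbf{s}_1\Vert^2$ make precise what the paper leaves implicit: the paper silently drops that factor by appeal to the $2$ dimensional subspace, and cites Yuan's equation (2.15) instead of matching (\ref{Y}) directly.

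One caution: only your first resolution of the $a_{33}$ issue (restricting $\mathbf{Q}$ to two columns) actually yields the claim. The limiting argument shows that the root $a_{33}$ is irrelevant for (\ref{eq*}), but it does not stop Cardano's formula from labelling that root $\mu_1$ when it exceeds $\mu_+$.
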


\begin{proof}
	Yuan stepsize (\ref{Y}) is introduced in \cite{Yuan2006}. Its idea is to build up $2$ dimensional coordinate system and design Yuan stepsize with $2$ dimensional quadratic termination. For our analysis, if $\mathbf{g}_0$ is parallel to $\mathbf{g}_2$, then it degenerates into a $2$ dimensional subspace. The parameter
	\begin{equation*}
		\gamma = \frac{(\mathbf{g}_2^T \mathbf{g}_0)^2}{\Vert\mathbf{g}_0\Vert^2 \|\mathbf{g}_2\|^2} = 1
	\end{equation*}
	holds. Take it into (\ref{2.9}), the equation is degenerated into 
	\begin{equation*}
		\left(\mu - \frac{1}{\alpha_0^{SD}}\right)\left(\mu - \frac{1}{\alpha_1^{SD}}\right) - \beta= 0.
	\end{equation*}
	The above equation is exactly the same as equation in \cite[(2.15)]{Yuan2006}. Since $\alpha^{NY(1)}$ is the shortest stepsize among the three new stepsize, in the $2$ dimensional subspace it degenerates into $\alpha^Y$, which is the shorter stepsize for $2$ dimensional case.\hfill $\square$
	
\end{proof}

	By utilizing the derived stepsizes $\alpha^{NY(i)},i=1,2,3$, we are able to find the optimal solution in $5$ iterations for $3$ dimensional quadratic function minimization problem. Next we will use the shortest stepsize $\alpha^{NY}=\alpha^{NY(1)}$ to design efficient gradient methods. Since it is the inverse eigenvalue of the $3$ dimensional Hessian matrix, with gradient step and stepsize $\alpha^{NY}$, the iteration point would fall into $2$ dimensional subspace from $3$ dimensional problem. In the $2$ dimensional subspace $\alpha^{NY}$ becomes $\alpha^{Y}$, which again reduces the subspace dimension. In this way, when the iteration point falls into $3$ dimensional subspace, the gradient method with the new stepsize and Cauchy stepsize would terminate immediately.

\section{New Gradient Method}\label{section3}

In this section, the proposed stepsize $\alpha^{NY}$ is used in the new gradient method. As mentioned in Section \ref{section1}, cyclic stepsize update strategy (\ref{SL}) is proper for large scale problem. With several repeated stepsizes in one cycle, the average computational cost in each iteration is reduced. Thus here we plug the stepsize $\alpha^{NY}$ into the cyclic gradient method framework to generate new gradient method. The discussions are first made for the quadratic function minimization problem, and then the gradient method is generalized to the general case.

\subsection{Quadratic case}

Consider the problem with quadratic objective function (\ref{3dimfb0}). Combining with the cyclic framework proposed in \cite{Sun2020}, our new stepsize update strategy is
\begin{equation}\label{NY}
	\alpha_{k}=
	\left\{
	\begin{aligned}
		\alpha_{k}^{SD},\quad &\mod(k,T)<2;\\
		\alpha_{k}^{NY},\quad &\mod(k,T)=2;\\
		\alpha_{k-1},\quad &\text{otherwise},
	\end{aligned}
	\right.
\end{equation}
where $T$ is the number of iterations in one cycle. Different from (\ref{SL}), the fixed stepsize $\alpha_k^{NY}$ is computed via the information of the current (k-th) iteration. Three Cauchy stepsizes are computed in one cycle. Using the new stepsize update strategy (\ref{NY}), we summarize the corresponding gradient method in Algorithm \ref{algNY}, which is named as NY. 

\begin{algorithm}[h]
	\caption{NY algorithm (quadratic case)}
	\label{algNY}
	\textbf{Input:} Initial point $\mathbf{x}_0$; $k=0$; the iteration number in each cycle $T$; the stopping parameters $\epsilon$. \\
	\textbf{Output:} $\mathbf{x}_k$.

	\begin{tabbing}
		\hspace{1.5em} \= \hspace{1.5em} \= \hspace{1.5em} \= \kill
		
		\textbf{while} $\|\mathbf{g}_k\| > \epsilon \|\mathbf{g}_0\|$ \textbf{do} \\
		
		\> \textbf{if} $\text{mod}(k, T) < 2$ \textbf{then} \\
		\> \> Let $\alpha_k = \alpha_k^{SD}$; \\
		
		\> \textbf{else if} $\text{mod}(k, T) = 2$ \textbf{then} \\
		\> \> Let $\alpha_k = \alpha_k^{NY}$; \\
		
		\> \textbf{else} \\
		\> \> Let $\alpha_k = \alpha_{k-1}$; \\
		
		\> Let $\mathbf{x}_{k+1} = \mathbf{x}_k - \alpha_k \mathbf{g}_k$ and $k := k + 1$. \\
		
		\textbf{end while}
	\end{tabbing}
\end{algorithm}
As analyzed in Section \ref{section2}, the proposed stepsize has the property of $3$ dimensional quadratic termination. The following theorem verifies the conclusion.

\begin{theorem}\label{3d5f}
	For $3$ dimensional convex quadratic function minimization problem, Algorithm $\ref{algNY}$ \nobreak terminates in $2T+1$ iterations.
\end{theorem}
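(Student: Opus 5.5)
The plan is to track which eigencomponents of the gradient survive each cycle. Let $\lambda_1\ge\lambda_2\ge\lambda_3>0$ be the eigenvalues of $\mathbf{H}$ with orthonormal eigenvectors $\mathbf{v}_1,\mathbf{v}_2,\mathbf{v}_3$. For a quadratic, $\mathbf{g}_{k+1}=(\mathbf{I}-\alpha_k\mathbf{H})\mathbf{g}_k$, so every step multiplies the component of $\mathbf{g}_k$ along $\mathbf{v}_i$ by $1-\alpha_k\lambda_i$. I will show that each cycle of Algorithm \ref{algNY} permanently kills one eigencomponent. If some gradient vanishes earlier, the algorithm has already stopped, so throughout I assume all gradients are nonzero. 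I also assume $T\ge 3$, so that the NY step actually occurs in each cycle.

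\textbf{First cycle.} Iterations $0$ and $1$ are Cauchy steps, giving $\mathbf{g}_0\perp\mathbf{g}_1$ and $\mathbf{g}_1\perp\mathbf{g}_2$. In the generic case $\mathbf{g}_0,\mathbf{g}_1,\mathbf{g}_2$ are linearly independent. Then $\mathbf{Q}$ is an orthogonal $3\times3$ matrix, so $\mathbf{A}=\mathbf{Q}^T\mathbf{H}\mathbf{Q}$ is similar to $\mathbf{H}$. Hence $\mu_1=\lambda_1$, and by Theorem \ref{thm:stepsize_properties} $\alpha_2=\alpha^{NY}=1/\lambda_1$. Consequently $\mathbf{g}_3=(\mathbf{I}-\mathbf{H}/\lambda_1)\mathbf{g}_2$ has no $\mathbf{v}_1$-component. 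The remaining iterations $3,\dots,T-1$ use the same stepsize, and each maps $\mathrm{span}\{\mathbf{v}_2,\mathbf{v}_3\}$ into itself. Therefore $\mathbf{g}_T\in\mathrm{span}\{\mathbf{v}_2,\mathbf{v}_3\}$.

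If instead $\mathbf{g}_0\parallel\mathbf{g}_2$, Theorem \ref{thm:geometric_properties} says the stepsize is the Yuan stepsize on the two dimensional subspace $\mathrm{span}\{\mathbf{g}_0,\mathbf{g}_1\}$. That subspace is $\mathbf{H}$-invariant, because $\mathbf{H}\mathbf{g}_0$ and $\mathbf{H}\mathbf{g}_1$ are combinations of $\mathbf{g}_0,\mathbf{g}_1,\mathbf{g}_2$, all of which lie in it. So the gradient lives in a two dimensional eigenspace from the start, and the second-cycle argument below applies one cycle earlier, giving termination at iteration $T+1$.

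\textbf{Second cycle.} Iterations $T$ and $T+1$ are Cauchy steps, and all the gradients $\mathbf{g}_T,\mathbf{g}_{T+1},\mathbf{g}_{T+2}$ lie in the plane $\mathrm{span}\{\mathbf{v}_2,\mathbf{v}_3\}$. Within that plane, $\mathbf{g}_{T+1}\perp\mathbf{g}_T$ and $\mathbf{g}_{T+1}\perp\mathbf{g}_{T+2}$ force $\mathbf{g}_T\parallel\mathbf{g}_{T+2}$, so $\gamma=1$. By Theorem \ref{thm:geometric_properties}, $\alpha_{T+2}^{NY}$ is the larger root $\mu$ of
$$\Big(\mu-\tfrac{1}{\alpha_T^{SD}}\Big)\Big(\mu-\tfrac{1}{\alpha_{T+1}^{SD}}\Big)-\beta=0.$$
This quadratic is the characteristic polynomial of $\mathbf{H}$ restricted to the invariant plane, written in the orthonormal basis $\mathbf{g}_T/\|\mathbf{g}_T\|$, $\mathbf{g}_{T+1}/\|\mathbf{g}_{T+1}\|$, which is exactly the content of Yuan's analysis. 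Hence its larger root is $\lambda_2$ and $\alpha_{T+2}=1/\lambda_2$. Then $\mathbf{g}_{T+3}$ is parallel to $\mathbf{v}_3$, and the repeated stepsize $1/\lambda_2$ keeps it parallel to $\mathbf{v}_3$ through iteration $2T-1$.

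\textbf{Third cycle.} At iteration $2T$, $\mathbf{g}_{2T}$ is an eigenvector of $\mathbf{H}$, so the Cauchy stepsize is $\alpha_{2T}^{SD}=1/\lambda_3$. This gives $\mathbf{g}_{2T+1}=\mathbf{0}$, i.e. $\mathbf{x}_{2T+1}=\mathbf{x}^*$. That is termination after iterations $0,\dots,2T$, namely $2T+1$ iterations.

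The main obstacle is the degenerate bookkeeping. First, the roots of the reduced quadratic must be identified with the eigenvalues of $\mathbf{H}$ restricted to the invariant plane, and not merely with some $2\times2$ matrix; this is Yuan's analysis, and I would verify it by writing $\mathbf{H}$ in the orthonormal basis $\mathbf{g}_T/\|\mathbf{g}_T\|$, $\mathbf{g}_{T+1}/\|\mathbf{g}_{T+1}\|$. Second, the coincidences must be handled: repeated eigenvalues, or an eigencomponent of $\mathbf{g}_0$ that is already zero. In each of these the gradient lies in a lower dimensional invariant subspace from the outset, so the same argument terminates no later than stated. The case $\mathbf{g}_0\parallel\mathbf{g}_2$ is the one covered by the alternative in the first cycle.
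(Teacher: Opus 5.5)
Your proposal is correct and follows the same route as the paper: each cycle annihilates one eigencomponent of the gradient. In the first cycle $\alpha_2^{NY}=1/\lambda_1$; in the second, Theorem~\ref{thm:geometric_properties} reduces the step to the Yuan step $1/\lambda_2$; and the Cauchy step at $k=2T$ kills the remaining $\mathbf{v}_3$-component. You also supply details the paper leaves implicit (orthogonal similarity of $\mathbf{A}$ and $\mathbf{H}$ giving $\mu_1=\lambda_1$, the planar orthogonality argument forcing $\gamma=1$ at $k=T+2$, and the degenerate cases), with one small wording slip: $\alpha_{T+2}^{NY}$ is the reciprocal of the larger root, not the root itself.
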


\begin{proof}
	Without loss of generality, suppose the Hessian matrix in $(\ref{3dimfb0})$ is 
	\begin{equation*}
		\boldsymbol{H} = \text{Diag}(\lambda_1, \lambda_2, \lambda_3)
	\end{equation*}
	with $\lambda_1 \geq \lambda_2 \geq \lambda_3 > 0$, and $\boldsymbol{b}=\boldsymbol{0}$. Let $\boldsymbol{g}_k = (\boldsymbol{g}_k^{(1)}, \boldsymbol{g}_k^{(2)}, \boldsymbol{g}_k^{(3)})^T$. The iteration yields
	\begin{equation}\label{g_kgk}
		\boldsymbol{g}_{k+1} = (\boldsymbol{I} - \alpha_k \boldsymbol{H})\boldsymbol{g}_k,
	\end{equation}
	 implying
	\begin{equation}\label{g_fl}
		\boldsymbol{g}_{k+1}^{(i)} = (1 - \lambda_i \alpha_k)\boldsymbol{g}_k^{(i)}, \quad i=1,2,3.
	\end{equation}
	
	In the first cycle ($0 \le k < T$), at step $k=2$, we have
	\[
	\alpha_2 = \alpha_2^{NY} = \frac{1}{\lambda_1}.
	\]
	Through $(\ref{g_fl})$, we have
	\[
	\boldsymbol{g}_{3}^{(1)} = (1 - \lambda_1 \lambda_1^{-1})\boldsymbol{g}_{2}^{(1)} = 0,
	\]
	which implies $\boldsymbol{g}_k^{(1)} = 0$ for all $k \geq 3$.
	
	In the second cycle ($T \le k < 2T$), at step $k=T+2$, we should have $\alpha_{T+2}=\alpha_{T+2}^{NY}$. Since $\boldsymbol{g}_{T+2}^{(1)} = 0$, problem (\ref{3dimfb0}) degenerates into a $2$ dimensional problem. According to Theorem \ref{thm:geometric_properties}, in this case we have
	\[
	\alpha_{T+2} =  \alpha^{YV}_{T+2} =\frac{1}{ \lambda_2}.
	\]
	This leads to
	\[
	\boldsymbol{g}_{T+3}^{(2)} = (1 - \lambda_2 \frac{1}{\lambda_2})\boldsymbol{g}_{T+2}^{(2)} = 0.
	\]
	And for all $k\geq T+3$, it holds that $\boldsymbol{g}_k^{(1)}=\boldsymbol{g}_k^{(2)}=0$. The problem now becomes a $1$ dimensional problem.
	
	In the first step of the third cycle $(k=2T+1)$, the Cauchy stepsize is applied. Since the problem now is $1$ dimensional, it goes to the optimal solution after the exact line search. That is $\boldsymbol{g}_{2T+1}^{(3)}=0$ and thus we have $\boldsymbol{g}_{2T+1}=\mathbf{0}$.
	
	From the above analysis, we can conclude that Algorithm $\ref{algNY}$ terminates in $2T+1$ iterations.
	
	\hfill $\square$
\end{proof}

Theorem $\ref{3d5f}$ guarantees the finite termination of the proposed NY method for $3$ dimensional \nobreak problem. Next, the convergence result of NY for general $n$ dimensional problem is analyzed. 
\begin{lemma}\label{lemma3.1}
	For $n$ dimensional problem (\ref{3dimfb0}), assume that $\mathbf{H}=Diag(\lambda_1, \lambda_2, \cdots, \lambda_n)$ is a diagonal matrix, where $\lambda_1\geq\lambda_2\geq\cdots\geq\lambda_n=1$ and $\{\mathbf{d}_1, \mathbf{d}_2, \cdots, \mathbf{d}_n\}$ is a set of associated orthonormal eigenvectors. Algorithm \ref{algNY} is applied to problem $(\ref{3dimfb0})$. Suppose that the initial point $\mathbf{x}_0$ satisfies $\mathbf{g}_0^T\mathbf{d}_1\neq0$ and $\mathbf{g}_0^T\mathbf{d}_n\neq0$. Let $\mathbf{g}_k=\sum_{i=1}^{n}\lambda_i^k\mathbf{d}_i$ and define $h(k,l)=\sum_{i=l}^{n}(\lambda_i^k)^2$. Then Algorithm \ref{algNY} satisfies the following two properties.\\
	(i) There exists a constant $M_1\geq\lambda_n$, such that $\lambda_n\leq\alpha_k^{-1}\leq M_1$ holds for all $k>0$.\\
	(ii) If there exists a positive integer $k_0$ and a positive constant $M_2$, such that for any $l\in\{2,...,n\}$ and $\forall\epsilon>0$, $h(k-j,l)<\epsilon$ and $(\lambda_{l-1}^{k-j})^2\geq M_2\epsilon$ hold for any $j\in\{0,\cdots,\min\{k,k_0\}\}$, then $\alpha_k^{-1}\geq M_3\lambda_{l-1}$, where $\frac{1}{2}<M_3<1$ is a constant.
\end{lemma}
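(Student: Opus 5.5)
We check the two properties of the stepsize sequence in the Dai--Fletcher style framework. By (\ref{NY}), every $\alpha_k$ equals either a Cauchy stepsize $\alpha_j^{SD}$ or a new stepsize $\alpha_j^{NY}$ with $j\le k$ in the same cycle. So it suffices to verify (i) and (ii) for these two kinds of stepsizes, evaluated at the iterate where they were computed. The index lag is at most $T-1$, and this lag is what $k_0$ (taken as $k_0=T$) absorbs.

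For (i), the Cauchy stepsize satisfies $1/\alpha_j^{SD}=\mathbf{g}_j^T\mathbf{H}\mathbf{g}_j/\mathbf{g}_j^T\mathbf{g}_j$. This Rayleigh quotient lies in $[\lambda_n,\lambda_1]$.

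For $\alpha_j^{NY}=\mu_1^{-1}$, the value $\mu_1$ is the largest eigenvalue of $\mathbf{A}=\mathbf{Q}^T\mathbf{H}\mathbf{Q}$, and $\mathbf{Q}$ has orthonormal columns spanning $\mathrm{span}\{\mathbf{g}_{j-2},\mathbf{g}_{j-1},\mathbf{g}_{j}\}$. By the Cauchy interlacing theorem, $\lambda_n\le\mu_3\le\mu_1\le\lambda_1$. In the degenerate case of Theorem \ref{thm:geometric_properties}, $\mathbf{A}$ is $2\times2$ and the same interlacing holds. Hence $M_1=\lambda_1$ works, and $\lambda_n\le\alpha_k^{-1}\le\lambda_1$ for all $k$.

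For (ii), we again treat the two cases separately. If $\alpha_k=\alpha_j^{SD}$ with $j=k-i$ and $0\le i\le k_0$, we bound the Rayleigh quotient from below:
\[
\frac{1}{\alpha_j^{SD}}\ge\frac{\sum_{i<l}\lambda_i(\lambda_i^j)^2}{\sum_{i<l}(\lambda_i^j)^2+h(j,l)}\ge\frac{\lambda_{l-1}\sum_{i<l}(\lambda_i^j)^2}{\sum_{i<l}(\lambda_i^j)^2+\epsilon}.
\]
Since $\sum_{i<l}(\lambda_i^j)^2\ge(\lambda_{l-1}^j)^2\ge M_2\epsilon$, the right-hand side is at least $\frac{M_2}{M_2+1}\lambda_{l-1}$. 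Choosing $M_3=\min\{M_2/(M_2+1),\cdot\}$, which can be made $>\frac12$ by taking $M_2>1$ (or by shrinking the definition), handles this case.

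If $\alpha_k=\alpha_j^{NY}$, we use the variational characterization $\mu_1=\max_{\|\mathbf{z}\|=1,\,\mathbf{z}\in\mathrm{span}\mathbf{Q}}\mathbf{z}^T\mathbf{H}\mathbf{z}$. Taking $\mathbf{z}=\mathbf{g}_j/\|\mathbf{g}_j\|$ gives $\mu_1\ge 1/\alpha_j^{SD}$, so the previous bound applies verbatim. The lower bound therefore transfers to the NY stepsize with no computation involving Cardano's formula.

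The main obstacle is bookkeeping rather than analysis. First, the retained stepsize at iteration $k$ was computed from $\mathbf{g}_{j}$ with $k-j\le T-1$, so the hypothesis must hold for every $j$ in $\{k-k_0,\dots,k\}$; this is exactly how (ii) is phrased. Second, the constant $M_3\in(\frac12,1)$ requires $M_2$ to be large enough. If $M_2$ is not assumed large, I would instead state $M_3=\max\{\frac12+\delta,\,M_2/(M_2+1)\}$ only after replacing $\epsilon$ by $\epsilon/c$. Since the hypothesis holds for all $\epsilon>0$, this rescaling is permitted: applying it with $\epsilon'=\epsilon/c$ yields the ratio $cM_2/(cM_2+1)$, which can be pushed above $\frac12$.
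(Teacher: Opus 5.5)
Your argument is correct, and it takes a genuinely different and simpler route than the paper.

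\textbf{The paper's route.} The paper stays inside Cardano's formula. It sandwiches $\mu_1$ between $t_1/3+\sqrt{-p/3}$ and $t_1/3+2\sqrt{-p/3}$. It then controls $t_1$ through $\lambda_n\le a_{ii}\le\lambda_1$ and $-p/3$ through $\beta\le\lambda_1^2-\lambda_n^2$. This gives an $M_1$ strictly larger than $\lambda_1$; as printed, it even drops the factor $2$ in front of the square root. For (ii), the paper fixes $M_2=2$ and imports $(\alpha^{SD})^{-1}\ge\frac23\lambda_{l-1}$ from Dai's (4.3) at the two preceding Cauchy iterates. It then needs a two-case analysis on whether $a_{33}\ge a_{11}$, and ends with only $M_3=(4+\sqrt2)/9$.

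\textbf{Your route.} You use that $\mathbf{A}=\mathbf{Q}^T\mathbf{H}\mathbf{Q}$ is the compression of $\mathbf{H}$ to $\mathrm{span}\{\mathbf{g}_{j-2},\mathbf{g}_{j-1},\mathbf{g}_j\}$. Rayleigh--Ritz then gives $\lambda_n\le\mu_1\le\lambda_1$, so $M_1=\lambda_1$, and also $\mu_1\ge(\alpha_j^{SD})^{-1}$. This reduces (ii) to the one-line Cauchy bound, with $M_3=M_2/(M_2+1)=\frac23$ for $M_2=2$.

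\textbf{What your route buys.} The constant $\frac23$ is exactly Dai's constant, so Theorem~\ref{thm:linear_convergence} would follow from Dai's theorem with no modification. You also handle explicitly the lag between the iterate where a retained stepsize is computed and the iterate where it is used, which the paper passes over. And you need the hypothesis at only a single index.

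\textbf{One step to state rather than assume.} The paper derived the identity $\mathbf{A}=\mathbf{Q}^T\mathbf{H}\mathbf{Q}$, with $\mathbf{Q}$ having orthonormal columns, for $n=3$. It persists with $\mathbf{Q}\in\mathbb{R}^{n\times 3}$ for general $n$. The reason is that the entries in (\ref{Q_a33}) use only $\mathbf{g}_{j-2}\perp\mathbf{g}_{j-1}\perp\mathbf{g}_j$ and $\mathbf{g}_{i+1}=(\mathbf{I}-\alpha_i^{SD}\mathbf{H})\mathbf{g}_i$ for $i=j-2,j-1$. The paper relies on this implicitly as well.

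\textbf{Drop the rescaling fallback at the end.} From $h<\epsilon$ you cannot infer $h<\epsilon/c$. If you rewrite both hypotheses in terms of $\epsilon'=\epsilon/c$, the ratio you obtain is again $M_2/(M_2+1)$, so nothing is gained. The lemma is meant to be read with $M_2$ at your disposal, as the paper's proof does by taking $M_2=2$. Under that reading your main argument already closes.
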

\begin{proof}
	According to the closed form of (\ref{Cauchy}), it is trivial to see that
	\begin{equation}\label{lamb_sd}
		\lambda_n\leq(\alpha_k^{SD})^{-1}\leq\lambda_1
	\end{equation}
	 holds for any $k$. Thus from the expression of $a_{ii}, i=1,2,3$ in (\ref{Q_a33}), we have $\lambda_n\leq a_{ii}\leq\lambda_1$ for $i=1,2,3$.
	
	According to the analysis in Theorem \ref{thm:stepsize_properties} and (\ref{t}), we can deduce that
	\begin{equation*}
		(\alpha_{k}^{NY})^{-1}\geq\frac{t_1}{3}+\sqrt{-\frac{p}{3}}\geq\frac{t_1}{3}\geq\lambda_n.
	\end{equation*}
	 Furthermore, 
	\begin{equation*}
		\begin{split}
			-\frac{p}{3}&=\frac{\beta}{3}+\frac{1}{9}(a_{11}+a_{22}+a_{33})^2-\frac{1}{3}(a_{11}a_{22}+a_{22}a_{33}+a_{11}a_{33})\\
			&=\frac{\beta}{3}+\frac{1}{18}[(a_{11}-a_{22})^2+(a_{22}-a_{33})^2+(a_{11}-a_{33})^2]\\
			&\leq\frac{\lambda_1^2-\lambda_n^2}{3}+\frac{(\lambda_1-\lambda_n)^2}{6}=\frac{(\lambda_1-\lambda_n)(3\lambda_1+\lambda_n)}{6}.
		\end{split}
	\end{equation*}
The inequality holds due to (\ref{g_kgk}) and
\begin{equation*}
	\beta=\frac{\Vert\mathbf{g}_k\Vert^2}{(\alpha^{SD}_{k-1})^2\Vert\mathbf{g}_{k-1}\Vert^2}=\frac{\mathbf{g}_{k-1}^T\mathbf{H}^2\mathbf{g}_{k-1}}{\mathbf{g}_{k-1}^T\mathbf{g}_{k-1}}-(\frac{\mathbf{g}_{k-1}^T\mathbf{H}\mathbf{g}_{k-1}}{\mathbf{g}_{k-1}^T\mathbf{g}_{k-1}})^2\leq\lambda_1^2-\lambda_n^2.
\end{equation*}
	
	Let $M_1=\lambda_1+\sqrt{\frac{(\lambda_1-\lambda_n)(3\lambda_1+\lambda_n)}{6}}$, we have
	\begin{equation*}
		(\alpha_{k}^{NY})^{-1}\leq\frac{t_1}{3}+2\sqrt{-\frac{p}{3}}\leq M_1.
	\end{equation*}
	 From (\ref{lamb_sd}) we can see that $\lambda_n\leq(\alpha_k^{NY})^{-1}\leq M_1$ for all $k$. This shows that Algorithm \ref{algNY} satisfies Property $(i)$.
	
	From the deduction in \cite[(4.3)]{DaiAS}, it holds that $(\alpha_k^{SD})^{-1}\geq\frac{2}{3}\lambda_{l-1}$ for all $k$. Next, we shall discuss Property (ii) from two situations. Let $M_2=2$ and $M_3=\frac{4+\sqrt{2}}{9}$. \\
	(1) If $(\alpha^{SD}_k)^{-1}\geq(\alpha^{SD}_{k-2})^{-1}$, we have that 
	\begin{equation}\label{a33}
		a_{33}=\frac{\frac{1}{\alpha_k^{SD}}-\frac{1}{\alpha^{SD}_{k-2}}\gamma}{1-\gamma}\geq(\alpha^{SD}_{k-2})^{-1}.
	\end{equation}
	Thus 
	\begin{equation}
		(\alpha_k^{NY})^{-1}\geq\frac{t_1}{3}+\sqrt{-\frac{p}{3}}\geq\frac{t_1}{3}\geq\frac{2(\alpha^{SD}_{k-2})^{-1}+(\alpha^{SD}_{k-1})^{-1}}{3}\geq\frac{2}{3}\lambda_{l-1}\geq M_3\lambda_{l-1}.
	\end{equation}
	\\
	(2) If $(\alpha^{SD}_k)^{-1}<(\alpha^{SD}_{k-2})^{-1}$, the inequality $(\ref{a33})$ does not hold. Let 
	\begin{equation*}
		(\alpha^{SD}_k)^{-1}=t(\alpha^{SD}_{k-2})^{-1},\quad 0<t<1.
	\end{equation*}
	Since $a_{33}=\frac{\tilde{\mathbf{g}}_{k}\mathbf{H}\tilde{\mathbf{g}}_{k}}{\Vert\tilde{\mathbf{g}}_{k}\Vert^{2}}>0$, we have $\gamma<t<1$. Then we can deduce that
	\begin{equation*}
		\begin{split}
			a_{33}&=\frac{t-\gamma}{1-\gamma}\cdotp(\alpha^{SD}_{k-2})^{-1}\leq (\alpha_{k-2}^{SD})^{-1},\\
			\frac{t_1}{3}&=\frac{a_{11}+a_{22}+a_{33}}{3}\geq(\frac{4}{9}+\frac{2}{9}\cdotp\frac{t-\gamma}{1-\gamma})\lambda_{l-1}.
		\end{split}
	\end{equation*}
	Consequently,
	\begin{equation*}
		\begin{aligned}
			\sqrt{-\frac{p}{3}}&=\sqrt{\frac{\beta}{3}+\frac{1}{18}[(a_{11}-a_{22})^2+(a_{22}-a_{33})^2+(a_{11}-a_{33})^2]}\\
			&\geq\sqrt{\frac{(a_{11}-a_{33})^2}{18}}=\frac{1}{3\sqrt{2}}\cdotp(1-\frac{t-\gamma}{1-\gamma})(\alpha^{SD}_{k-2})^{-1}\geq\frac{\sqrt{2}}{9}(1-\frac{t-\gamma}{1-\gamma})\lambda_{l-1},
		\end{aligned}
	\end{equation*}
	and 
	\begin{equation*}
		(\alpha_k^{NY})^{-1}\geq\frac{t_1}{3}+\sqrt{-\frac{p}{3}}\geq(\frac{4}{9}+\frac{\sqrt{2}}{9}+\frac{2-\sqrt{2}}{9}\cdotp\frac{t-\gamma}{1-\gamma})\lambda_{l-1}\geq M_3\lambda_{l-1}.
	\end{equation*}
	Summarizing the two situations, Property (ii) holds.\hfill $\square$
\end{proof}

\begin{theorem}\label{thm:linear_convergence}
	Under the same assumptions as in Lemma \ref{lemma3.1}, Algorithm $\ref{algNY}$ either terminates in finite iterations or the sequence $\{\|\mathbf{g}_k\|\}$ converges to zero R-linearly.
\end{theorem}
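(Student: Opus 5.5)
The plan is to invoke the general $R$-linear convergence framework of Dai for gradient methods on strictly convex quadratics, the reference \cite{DaiAS} already used in the proof of Lemma \ref{lemma3.1}. That result states the following. Suppose the stepsizes of a gradient method applied to (\ref{3dimfb0}) with $\mathbf{H}=\mathrm{Diag}(\lambda_1,\dots,\lambda_n)$, $\lambda_n=1$, satisfy two conditions:
\begin{itemize}
\item[(a)] $\lambda_n\le\alpha_k^{-1}\le M_1$ for all $k$;
\item[(b)] whenever $h(k-j,l)<\epsilon$ and $(\lambda_{l-1}^{k-j})^2\ge M_2\epsilon$ for $j=0,\dots,\min\{k,k_0\}$, one has $\alpha_k^{-1}\ge M_3\lambda_{l-1}$ with $M_3>1/2$.
\end{itemize}
Then either $\mathbf{g}_k=\mathbf{0}$ for some finite $k$, or $\|\mathbf{g}_k\|\to0$ $R$-linearly. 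Since Lemma \ref{lemma3.1} establishes precisely Properties (i) and (ii), the theorem should follow by checking that the hypotheses of that framework match our setting.

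The steps, in order, are as follows.
\begin{enumerate}
\item Recall the componentwise recursion (\ref{g_fl}), $\lambda_i^{k+1}=(1-\alpha_k\lambda_i)\lambda_i^k$, together with the assumption $\mathbf{g}_0^T\mathbf{d}_1\neq0$, $\mathbf{g}_0^T\mathbf{d}_n\neq0$.
\item If some $\mathbf{g}_k=\mathbf{0}$, the algorithm terminates finitely and we are done. Otherwise, verify that every $\alpha_k$ in (\ref{NY}) is well defined and falls under Lemma \ref{lemma3.1}. The Cauchy steps satisfy (\ref{lamb_sd}). The NY steps satisfy (i), and also (ii) with $M_2=2$ and $M_3=(4+\sqrt2)/9>1/2$. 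The repeated steps $\alpha_k=\alpha_{k-1}$ inherit bound (i) immediately.
\item Since Property (ii) is stated for $\alpha_k$ at index $k$, argue that the repeated steps inherit it as well. Property (ii) requires the smallness conditions over the window $j\le k_0$. If we enlarge $k_0$ to at least $T$, the hypothesis at index $k$ covers the index $k'\le k$ at which the currently used stepsize was computed. Hence $\alpha_k^{-1}=\alpha_{k'}^{-1}\ge M_3\lambda_{l-1}$.
\item Apply Dai's theorem to conclude $R$-linear convergence of $\{\|\mathbf{g}_k\|\}$.
\end{enumerate}

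There is a degenerate case to handle: when $\mathbf{g}_{k-2}$ and $\mathbf{g}_k$ are parallel ($\gamma=1$), the coordinate system behind $\alpha^{NY}$ breaks down. Here I would appeal to Theorem \ref{thm:geometric_properties}: the stepsize reduces to $\alpha^{Y}$, for which the same bounds hold. For (i), $\alpha^{Y}$ is the inverse of the largest root of a $2\times2$ characteristic polynomial with diagonal entries in $[\lambda_n,\lambda_1]$. For (ii), the same argument as in Lemma \ref{lemma3.1} applies with $a_{33}$ removed.

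I expect the main obstacle to be step 3, transferring Property (ii) to the constant stepsizes inside a cycle. One must make sure that the window length $k_0$ in Lemma \ref{lemma3.1} can be chosen independently of $l$ and $\epsilon$ and large enough to reach back to the index where the NY or Cauchy step was computed. One must also check that the Cauchy bound $(\alpha^{SD})^{-1}\ge\frac23\lambda_{l-1}$ from \cite{DaiAS} holds at those earlier indices under the same smallness hypotheses. My first attempt would be to set $k_0\ge T+2$, so that the windows for all relevant earlier Cauchy indices ($k'-2,k'-1,k'$) lie inside the window at $k$.
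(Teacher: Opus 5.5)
\textbf{The gap: the cited theorem is stated with the constant $\frac23$, not with an arbitrary $M_3>\frac12$.}

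Your route is the paper's route: feed the two properties of Lemma \ref{lemma3.1} into Dai's Theorem 4.1. The problem is how you quote that theorem. Dai's Property A, and hence his Theorem 4.1, requires $\alpha_k^{-1}\ge\frac23\lambda_{l-1}$ in the conclusion of (b). Lemma \ref{lemma3.1} only delivers $M_3=\frac{4+\sqrt2}{9}\approx0.602<\frac23$. So your Step 4 is not an application of the cited result as it stands. The check you skip is exactly the whole content of the paper's proof.

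The paper observes that $\frac23$ enters Dai's argument only at the steps where (b) is triggered. There it yields $\lambda_n/M_1\le\alpha_k\lambda_{l-1}\le1/M_3$, and hence $(1-\alpha_k\lambda_{l-1})^2\le\delta_1:=\max\{(1-\lambda_n/M_1)^2,(1-1/M_3)^2\}$ with $\delta_1<1$. Everything downstream uses only $\delta_1<1$; this includes the choice of the block length $M$ giving $\Vert\mathbf{g}_{k+M}\Vert^2\le\frac14\Vert\mathbf{g}_k\Vert^2$. Since $(1-1/M_3)^2<1$ exactly when $M_3>\frac12$, the argument survives with $M_3=\frac{4+\sqrt2}{9}$. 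You need to supply this verification rather than build $M_3>\frac12$ into the statement of the cited theorem.

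\textbf{A way to avoid the issue.} You can bypass it and apply Dai's theorem verbatim. Since $\alpha_k^{NY}=1/\mu_1$ with $\mu_1=\lambda_{\max}(\mathbf{A})$, the Rayleigh quotient gives $(\alpha_k^{NY})^{-1}\ge a_{11}=(\alpha_{k-2}^{SD})^{-1}$. The Cauchy bound at index $k-2$, which your enlarged window covers, then gives $(\alpha_k^{NY})^{-1}\ge\frac23\lambda_{l-1}$ directly. In addition, $\mathbf{A}=\mathbf{Q}^T\mathbf{H}\mathbf{Q}$ with orthonormal columns gives $\mu_1\le\lambda_1$, so $M_1=\lambda_1$ works. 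The same two facts hold for the $2\times2$ Yuan matrix when $\gamma=1$. With $M_3=\frac23$, Dai's theorem applies as stated.

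\textbf{What is already right.} Your Step 3 is correct and needed: taking the window length of order $T$ lets the repeated steps $\alpha_k=\alpha_{k-1}$ inherit (b). Your handling of the degenerate case $\gamma=1$ is also correct. The paper's proof leaves both points implicit.
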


\begin{proof}
	We can get to the conclusion following the proof of\cite[Theorem 4.1]{DaiAS}. The key idea in its proof is that $M_3=\frac{2}{3}$, and $\delta_1=\max\{(1-\frac{\lambda_n}{M_1})^2, (1-\frac{1}{M_3})^2\}<1$ \footnote{Here we use the symbols in our paper, where $\lambda_n$ represents the smallest eigenvalue of the Hessian matrix $
		\boldsymbol{H}$.}. This guarantees the inequalities in\cite[Theorem 4.1]{DaiAS} hold, and further achieves the result that $\Vert \boldsymbol{g}_{k+M}\Vert^2\leq\frac{1}{4}\Vert \boldsymbol{g}_{k}\Vert^2$. 
	
	Here in our work, we have $M_3=\frac{4+\sqrt{2}}{9}$, and it still holds that
	\begin{equation*}
		\delta_1=\max\{(1-\frac{\lambda_n}{M_1})^2, (1-\frac{1}{M_3})^2\}<1.
	\end{equation*}
	Consequently we can get the same conclusion that $\Vert \boldsymbol{g}_{k+M}\Vert^2\leq\frac{1}{4}\Vert \boldsymbol{g}_{k}\Vert^2$. Thus Theorem 3.2 holds.\hfill $\square$
\end{proof}

\subsection{General case}
In this subsection, the proposed NY method is extended to solve the general unconstrained problem (\ref{f1.1}).

Since there is no special structure for the objective function $f(\mathbf{x})$, the main difficulty for the extension of the NY algorithm is the construction of Cauchy stepsize. It has no closed form solution. Here we adopt the quadratic interpolation idea \cite{ACINP} to approximate Cauchy stepsizes. Mainly, in the k-th iteration, we use the function value and gradient of the current iteration point $\mathbf{x}_k$ and the function value of a new point $\hat{\mathbf{x}}=\mathbf{x}_k-\beta_0\mathbf{g}_k$ to build up a $1$ dimensional quadratic interpolation function for $ \phi(\alpha)\triangleq f(\mathbf{x}_k-\alpha \mathbf{g}_k)$. If it does not work well, $\hat{\mathbf{x}}$ would be replaced by the minimal point of the last quadratic interpolation function. The generation process for the approximated Cauchy stepsize $\alpha_k^{ASD}$ is shown in \cite[Algorithm 1]{ACINP}. By using the approximation of Cauchy stepsize in \cite{ACINP}, we can extend our Algorithm \ref{algNY} to solving problem (\ref{f1.1}). In the new algorithm, all Cauchy stepsizes $\alpha^{SD}$ are replaced by the approximated Cauchy stepsizes $\alpha_k^{ASD}$. Thus the proposed stepsize $\alpha_k^{NY}$ is also approximated, named as $\alpha_k^{ANY}$. The new stepsize updated strategy is

\begin{equation}\label{alANYk}
	\alpha_{k}=
	\left\{
	\begin{aligned}
		\alpha_{k}^{ASD},\quad &\mod(k,T)<2;\\
		\alpha_{k}^{ANY},\quad &\mod(k,T)=2;\\
		\alpha_{k-1},\quad &\text{otherwise}.
	\end{aligned}
	\right.
\end{equation}

The algorithm framework of the extended NY method (ANY) method is summarized in Algorithm \ref{algANY}.		

\begin{algorithm}[h]
	\caption{ANY algorithm}
	\label{algANY}
	\rm 
	\textbf{Input:} Initial point $\mathbf{x}_0$; $k=0$; $T$; the stopping parameters $\epsilon$. \\
	\textbf{Output:} $\mathbf{x}_k$.	
	\begin{tabbing}
		\hspace{1.5em} \= \hspace{1.5em} \= \kill
		
		\textbf{while} $\|\mathbf{g}_k\| > \epsilon \|\mathbf{g}_0\|$ \textbf{do} \\
		
		\> \textbf{if} $\text{mod}(k, T) < 2$ \textbf{then} \\
		\> \> Let $\alpha_k = \alpha_k^{ASD}$, where $\alpha_k^{ASD}$ is computed from \cite[Algorithm 1]{ACINP}; \\
		
		\> \textbf{else if} $\text{mod}(k, T) = 2$ \textbf{then} \\
		\> \> Compute $\alpha_k = \alpha_k^{ANY}$;\\
		
		\> \textbf{else} \\
		\> \> Let $\alpha_k = \alpha_{k-1}$; \\
		\> \textbf{end if} \\
		
		\> Let $\alpha_k := \max\{\alpha_{\min}, \min\{\alpha_k, \alpha_{\max}\}\}$; \\
		\> Compute $\lambda_k$ by the improved GLL line search\cite{HGLL} with $\alpha_k$ as the first trial;\\
		\> Let $\mathbf{x}_k := \mathbf{x}_k - \lambda_k \mathbf{g}_k$ and $k := k + 1$. \\
		
		\textbf{end while}
	\end{tabbing}
\end{algorithm}
The improved GLL line search \cite{HGLL} used here is different from the traditional GLL line search \cite{GLL1986} in the backtracking strategy. Specifically, while the classic approach reduces the stepsize by a fixed factor, the improved line search adaptively updates the stepsize via quadratic interpolation. The detailed process is shown in the Appendix. As rigorously analyzed in \cite{ref5}, the convergence results established for the traditional GLL in \cite{GLL1986} is also valid for the improved version. Thus the following theoretical analysis directly follows the convergence results of the traditional GLL\cite{GLL1986}.

Next, the convergence properties of Algorithm \ref{algANY} are analyzed. The next theorem shows the global convergence of Algorithm \ref{algANY}.
\begin{theorem} \label{thm:global_convergence}
	Suppose that $\Omega = \{\mathbf{x} \in \mathbb{R}^n : f(\mathbf{x}) \leq f(\mathbf{x}_0)\}$ is bounded, and the objective function $f : \mathbb{R}^n \to \mathbb{R}$ of (\ref{f1.1}) is continuously differentiable in the neighborhood of $\Omega$. Let $\{\mathbf{x}_k\}$ be the sequence generated by Algorithm \ref{algANY} for problem (\ref{f1.1}). Then, either $\mathbf{g}_k = 0$ for some finite $k$, or the following properties hold:
	\begin{enumerate}
		\item[(a)] $\{\mathbf{x}_k\} \subset \Omega$, and $\nabla f(\bar{\mathbf{x}}) = 0$ holds for any limit point $\bar{\mathbf{x}}$ of $\{\mathbf{x}_k\}$;
		\item[(b)] no limit point of $\{\mathbf{x}_k\}$ is a local maximum of $f$;
		\item[(c)] if there are finite stationary points of $f$ in $\Omega$, then the sequence $\{\mathbf{x}_k\}$ converges to a stationary point of (\ref{f1.1}).
	\end{enumerate}
\end{theorem}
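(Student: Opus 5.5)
The plan is to reduce Theorem \ref{thm:global_convergence} to the classical nonmonotone line search convergence theory of Grippo, Lampariello and Lucidi \cite{GLL1986}. We rely on the remark above that the improved GLL line search \cite{HGLL} inherits the GLL convergence results \cite{ref5}. The key observation is that the GLL framework is indifferent to how the trial stepsize is generated. It requires only three things:
\begin{enumerate}
\item[(1)] the search direction $\mathbf{d}_k=-\mathbf{g}_k$ is a sufficient descent direction satisfying the angle and norm conditions $\mathbf{g}_k^T\mathbf{d}_k\le -c_1\Vert\mathbf{g}_k\Vert^2$ and $\Vert\mathbf{d}_k\Vert\le c_2\Vert\mathbf{g}_k\Vert$;
\item[(2)] the initial trial stepsize lies in a fixed interval bounded away from $0$ and $\infty$;
\item[(3)] backtracking terminates with an accepted $\lambda_k$ satisfying the nonmonotone Armijo condition $f(\mathbf{x}_k-\lambda_k\mathbf{g}_k)\le \max_{0\le j\le \min\{k,M\}} f(\mathbf{x}_{k-j})-\sigma\lambda_k\Vert\mathbf{g}_k\Vert^2$.
\end{enumerate}

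The first step is therefore to verify these hypotheses for Algorithm \ref{algANY}. Condition (1) is trivial with $c_1=c_2=1$. Condition (2) holds regardless of whether $\alpha_k^{ASD}$, $\alpha_k^{ANY}$, or a repeated $\alpha_{k-1}$ is chosen, because of the safeguard $\alpha_k:=\max\{\alpha_{\min},\min\{\alpha_k,\alpha_{\max}\}\}$. This matters because the approximate stepsizes from the quadratic interpolation or Cardano's formula could otherwise be degenerate, for example negative or undefined when $\gamma=1$ or $p=0$. Any such value, once clipped, is still a legitimate first trial. For condition (3), we need the interpolation-based backtracking to reduce the stepsize by a factor confined to some $[\sigma_1,\sigma_2]\subset(0,1)$. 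This is the standard safeguard of the improved GLL \cite{HGLL}, as recorded in the Appendix. With it, finite termination of the line search follows from continuous differentiability near $\Omega$ and $\mathbf{g}_k\ne 0$.

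Next we deduce the conclusions. Since $f(\mathbf{x}_{k+1})\le\max_{j}f(\mathbf{x}_{k-j})\le f(\mathbf{x}_0)$ by induction, every iterate lies in $\Omega$, which gives the first half of (a). Boundedness of $\Omega$ then yields limit points and uniform continuity of $\nabla f$ on a compact neighbourhood. The GLL argument shows that the sequence of reference values $\max_j f(\mathbf{x}_{k-j})$ is nonincreasing and converges, and hence that $\lambda_k\Vert\mathbf{g}_k\Vert^2\to0$. Combined with the lower bound on accepted stepsizes, which follows from backtracking with bounded reduction factors and a mean-value argument, this gives $\Vert\mathbf{g}_k\Vert\to0$ along the relevant subsequences, and eventually $\nabla f(\bar{\mathbf{x}})=0$ at every limit point. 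This is exactly \cite[Theorem]{GLL1986}. Parts (b) and (c) are the remaining conclusions of the same theorem. Part (b) holds because the nonmonotone acceptance rule forces $f(\mathbf{x}_k)$ to stay above $\lim f$ in a way incompatible with a local maximum. Part (c) uses $\Vert\mathbf{x}_{k+1}-\mathbf{x}_k\Vert=\lambda_k\Vert\mathbf{g}_k\Vert\to0$ together with the fact that a sequence with vanishing steps has a connected limit set, which must be a single point if there are finitely many stationary points.

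The main obstacle is confirming that the improved GLL backtracking, whose trial stepsizes come from quadratic interpolation rather than a fixed factor, still satisfies the bounded-reduction condition $\lambda^{new}\in[\sigma_1\lambda,\sigma_2\lambda]$. This condition is what gives both finite termination of the line search and the key estimate $\lambda_k\ge c\min\{\alpha_{\min},\,|\mathbf{g}_k^T\mathbf{d}_k|/\Vert\mathbf{d}_k\Vert^2\}$ needed in the GLL proof. We would take this from \cite{ref5,HGLL} as the authors indicate, and otherwise simply invoke \cite{GLL1986}.
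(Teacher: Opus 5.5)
Your proposal is correct and follows essentially the same route as the paper. Like the paper, you check that $\mathbf{d}_k=-\mathbf{g}_k$ satisfies the GLL descent and norm conditions with $c_1=c_2=1$, note that the safeguarded first trial $\alpha_k\in[\alpha_{\min},\alpha_{\max}]$ is admissible, and then invoke the theorem of \cite{GLL1986}, transferred to the improved line search via \cite{ref5}.

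Two small remarks. First, the bounded-reduction property you call the main obstacle can be read directly from the Appendix: each rejected trial is replaced by $\bar{\alpha}\in[0.1\alpha,0.9\alpha]$ or by $0.5\alpha$. Second, the uniform estimate $\lambda_k\ge c\min\{\alpha_{\min},|\mathbf{g}_k^T\mathbf{d}_k|/\Vert\mathbf{d}_k\Vert^2\}$ you quote needs a Lipschitz gradient, which is not assumed here. GLL instead argue by contradiction using only uniform continuity of $\nabla f$ near the compact set $\Omega$, and since you ultimately cite their theorem, this does not affect the conclusion.
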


\begin{proof}
	Algorithm \ref{algANY} applies the improved GLL nonmonotone line search, and its theoretical results follows the traditional one. To share the global convergence result as GLL in \cite{GLL1986}, we need to prove that Algorithm \ref{algANY} satisfies the following three assumptions required by the theorem in \cite{GLL1986}, which are:	
	
	(i) there exists a constant $c_1 > 0$, such that:
	\begin{equation} \label{eq:cond_1}
		\mathbf{g}_k^T \mathbf{d}_k \leq -c_1 \Vert\mathbf{g}_k\Vert^2,
	\end{equation}
	where $\mathbf{d}_k$ is the search direction;
	
	(ii) there exists a constant $c_2 > 0$, such that:
	\begin{equation} \label{eq:cond_2}
		\Vert\mathbf{d}_k\Vert \leq c_2 \Vert\mathbf{g}_k\Vert;
	\end{equation}
	
	(iii) Let $\lambda_k = \rho^{p_k} \alpha_k$ be the stepsize corresponding to $\mathbf{d}_k$, where $\rho \in (0, 1)$ and $p_k$ is the first nonnegative integer $p$ satisfying:
	\begin{equation} \label{eq:cond_3}
		f(\mathbf{x}_k + \rho^p \alpha_k \mathbf{d}_k) \leq \max_{0 \leq j \leq m(k)} f(\mathbf{x}_{k-j}) + \delta \rho^p \alpha_k \mathbf{g}_k^T \mathbf{d}_k,
	\end{equation}
	where $m(0) = 0$ and $0 \leq m(k) \leq \min\{m(k-1) + 1, M\}$, $k \geq 1$, $\delta \in (0, 1)$, and $M \in \mathbb{N}$.
	
	It is obvious that the search direction $\mathbf{d}_k = -\mathbf{g}_k$ in Algorithm \ref{algANY} satisfies assumptions (i) and (ii) with $c_1 = c_2 = 1$. Since the algorithm employs the GLL nonmonotone line search in each iteration, assumption (iii) holds with the initial trial stepsize $a = \alpha_k$. As noted in the proof of the theorem in \cite{GLL1986}, it does not require $a$ to be a constant, but it must be positive. This condition is satisfied in Algorithm \ref{algANY} since the trial stepsize $\alpha_k$ has a positive lower bound $\alpha_{\min}$.
	
	Hence the conclusion of the theorem in \cite{GLL1986} holds. And Theorem \ref{thm:global_convergence} is proved.\hfill $\square$
\end{proof}
Next the convergence rate of the Algorithm \ref{algANY} is analyzed. 

\begin{theorem} \label{thm:convergence_rate}
	Suppose $\nabla f$ is continuous Lipschitz on $\mathbb{R}^n$, which means
	\begin{equation*}
		\|\nabla f(\mathbf{x}) - \nabla f(\mathbf{y})\| \le L \|\mathbf{x} - \mathbf{y}\|,\quad \forall \mathbf{x}, \mathbf{y} \in \mathbb{R}^n.
	\end{equation*}
	 Let $\{\mathbf{x}_k\}$ be a sequence generated by the Algorithm \ref{algANY} for problem (\ref{f1.1}), and assume $f$ attains the minimum at the point $\mathbf{x}^*$. Then, the following two cases hold.
	
	\begin{enumerate}
		\item[(a)] If $f$ is convex, then there exists a constant $C > 0$ such that for any $k$, it holds that
		\begin{equation}
			f(\mathbf{x}_k) - f(\mathbf{x}^*) \le \frac{C}{k}.
		\end{equation}
		
		\item[(b)] If $f$ satisfies the quadratic growth property\footnote{\cite{Huang2015} The continuously differentiable convex function $f$ has quadratic functional growth on the set $\Omega$ if there exists a positive constant $\eta$ satisfying the condition that for any $\mathbf{x}\in\Omega$ the following inequality holds:
			\begin{equation*}
				f(\mathbf{x})-f(\mathbf{x}^{*})\geq \eta\Vert\mathbf{x}-\mathbf{x}^*\Vert^{2},\ \ \ \forall\ \mathbf{x}\in\Omega,
			\end{equation*}
			where $\mathbf{x}^{*}$ is the projection point of $\mathbf{x}$ onto the optimal solution set of minimizing $f$.}, then there exist $\theta \in (0, 1)$ and $\nu > 0$ such that for any $k$, it holds that
		\begin{equation}
			\frac{f(\mathbf{x}_k) - f(\mathbf{x}^*)}{f(\mathbf{x}_0) - f(\mathbf{x}^*)} \le \nu \theta^k.
		\end{equation}
	\end{enumerate}
\end{theorem}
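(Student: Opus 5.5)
The plan is to reduce both statements to known convergence-rate results for nonmonotone (GLL-type) gradient methods whose search direction satisfies (\ref{eq:cond_1})--(\ref{eq:cond_2}) and whose trial stepsizes are bounded, as in \cite{Huang2015} and the analysis in \cite{ACINP}. The structure mirrors the proof of Theorem \ref{thm:global_convergence}: we only need to check that Algorithm \ref{algANY} fits the hypotheses of those results, since the particular choice $\alpha_k^{ANY}$ enters only through the safeguard $\alpha_k\in[\alpha_{\min},\alpha_{\max}]$.

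\textbf{Step 1: lower bound on the accepted stepsize.} With $\mathbf{d}_k=-\mathbf{g}_k$, the trial stepsize lies in $[\alpha_{\min},\alpha_{\max}]$. I will show that the accepted $\lambda_k$ satisfies $\lambda_k\ge\bar\lambda:=\min\{\alpha_{\min},\,\rho\,2(1-\delta)/L\}$, where $\rho$ is the lower bound on the backtracking factor of the improved GLL line search.

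This follows from the descent lemma. If $\lambda\le 2(1-\delta)/L$, then
\begin{equation*}
f(\mathbf{x}_k-\lambda\mathbf{g}_k)\le f(\mathbf{x}_k)-\lambda(1-\tfrac{L\lambda}{2})\|\mathbf{g}_k\|^2\le f(\mathbf{x}_k)-\delta\lambda\|\mathbf{g}_k\|^2 .
\end{equation*}
Since $f(\mathbf{x}_k)\le\max_{0\le j\le m(k)}f(\mathbf{x}_{k-j})$, any such $\lambda$ is accepted. Hence backtracking stops before $\lambda$ falls below $\rho\cdot 2(1-\delta)/L$.

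\textbf{Step 2: sufficient decrease of the running maximum.} Let $f^{\max}_k=\max_{0\le j\le m(k)} f(\mathbf{x}_{k-j})$. Step 1 gives
\begin{equation*}
f(\mathbf{x}_{k+1})\le f^{\max}_k-\delta\bar\lambda\|\mathbf{g}_k\|^2 .
\end{equation*}
Following the standard GLL argument, I group iterations into blocks of length $M+1$. Setting $F_l=\max_{0\le i\le M}f(\mathbf{x}_{l(M+1)+i})$, I obtain
\begin{equation*}
F_{l+1}\le F_l-\delta\bar\lambda\,\|\mathbf{g}_{k_l}\|^2
\end{equation*}
for some index $k_l$ in block $l+1$.

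\textbf{Step 3(a): convex case.} Boundedness of $\Omega$ (inherited from Theorem \ref{thm:global_convergence}) gives a constant $R$ with $\|\mathbf{x}_k-\mathbf{x}^*\|\le R$. Convexity then yields $f(\mathbf{x}_k)-f^*\le R\|\mathbf{g}_k\|$. This produces the recursion $e_{l+1}\le e_l-c\,e_{l+1}^2$ with $e_l=F_l-f^*$. The usual reciprocal trick, $1/e_{l+1}-1/e_l\ge c$, gives $e_l=O(1/l)$, and translating blocks back to iterations gives $C/k$. The main obstacle is the mismatch of indices: the gradient at $k_l$ must be tied to $F_{l+1}$ rather than $F_l$. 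I would handle this as in Dai's nonmonotone analysis. Using Lipschitz continuity of $\nabla f$ together with $\lambda_k\le\alpha_{\max}$, I bound $f(\mathbf{x}_{j})-f(\mathbf{x}_{k_l})$ for $j$ in the same block by a multiple of $\|\mathbf{g}_{k_l}\|^2$ plus later decrements. If that proves too delicate, I would cite the corresponding result in \cite{ACINP} directly, since its algorithm shares the same line search and the same stepsize bounds.

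\textbf{Step 3(b): quadratic growth.} Quadratic growth plus convexity gives a Polyak--{\L}ojasiewicz-type bound $f(\mathbf{x})-f^*\le \frac{1}{\eta}\|\mathbf{g}(\mathbf{x})\|^2$. The Step 2 recursion then becomes $e_{l+1}\le(1-c')e_l$, i.e.\ geometric decay per block. Since $f(\mathbf{x}_k)\le F_{\lfloor k/(M+1)\rfloor}$, this yields $\nu\theta^k$ with $\theta=(1-c')^{1/(M+1)}$.
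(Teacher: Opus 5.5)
Your argument is sound and rests on the same two facts the paper invokes: trial stepsizes confined to $[\alpha_{\min},\alpha_{\max}]$, and the GLL acceptance test. The paper, however, carries out none of the analysis; its proof is a one-line appeal to \cite[Theorem 4.1]{ref5}. You instead rebuild that result from three ingredients. The first is a uniform lower bound $\lambda_k\ge\min\{\alpha_{\min},0.2(1-\delta)/L\}$ on the accepted step. This correctly accounts for the adaptive backtracking factor in $[0.1,0.9]$ of Algorithm \ref{alg:improved_gll}, which the paper handles only by deferring to \cite{ref5}. The second is the standard decrease of block maxima. The third is either the error bound $f-f^*\le R\|\mathbf{g}\|$ from convexity on a bounded set, or the P{\L}-type bound from quadratic growth plus convexity. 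The citation buys brevity. Your route buys transparency about where Lipschitz continuity, $\alpha_{\min}$ and $\alpha_{\max}$ enter. In particular it exposes that part (a) uses a bounded level set, which is absent from the theorem's statement and only implicitly imported through the citation. Part (b) does not need it.

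Two small repairs are needed. First, choose $k_l$ as the predecessor of the index attaining $F_{l+1}$, so that $F_{l+1}=f(\mathbf{x}_{k_l+1})$. The index mismatch then costs a single step, $f(\mathbf{x}_{k_l+1})\le f(\mathbf{x}_{k_l})+\frac{L}{2}\alpha_{\max}^2\|\mathbf{g}_{k_l}\|^2$. You need this same bound in (b) as well. Your Step 3(b) silently assumes it, because the P{\L} bound at $\mathbf{x}_{k_l}$ alone does not control $e_{l+1}$ for a nonmonotone sequence. With it you get $e_{l+1}\le(1/\eta+L\alpha_{\max}^2/2)\|\mathbf{g}_{k_l}\|^2$, hence geometric decay. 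Second, $e_{l+1}\le e_l-c\,e_{l+1}^2$ only yields $1/e_{l+1}-1/e_l\ge c\,e_{l+1}/e_l$, not $\ge c$. The usual case split ($e_{l+1}\ge e_l/2$ or not) gives $1/e_{l+1}-1/e_l\ge\min\{c/2,1/e_0\}$, which still gives $e_l=O(1/l)$ and hence $C/k$.
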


\begin{proof}
	The proof follows from the convergence rate analysis established in \cite[Theorem 4.1]{ref5}.
	In Algorithm \ref{algANY}, the stepsize $\alpha_k$ is restricted to the interval $[\alpha_{\min}, \alpha_{\max}]$. Combining with the GLL nonmonotone line search, we are able to get to the conclusions (a) and (b). \hfill $\square$
\end{proof}

\section{Numerical Tests}\label{section4}
In this section, we present numerical comparisons of our proposed new methods with the benchmark gradient methods on optimization problems with both quadratic and general nonquadratic objective functions. All tests are done on MATLAB 2021a, Intel Core i9-13900.

For the new methods proposed in this paper, the parameter settings are as follows:
\begin{equation*}
	\alpha_{\min}=10^{-10}, \quad \alpha_{\max}=10^{5}, \quad T=7, \quad \epsilon=10^{-6}
\end{equation*}
and the parameters of the adaptive nonmonotone line search are the same as \cite{HGLL}. 
\subsection{Algorithm comparison}

In this part, a total of $93$ unconstrained test problems from CUTEst\cite{CUTEst} with problem dimensions from $4$ to $20,000$ are considered. The following gradient methods are tested: 
\begin{itemize}
	\item cyclic gradient methods with SL framework: AC-INP-YV \cite{ACINP}, ZS-YV\cite{Zhangya2022};
	\item BB-type methods: ABBmin\cite{ABBmin19}, MPSG\cite{Huang2022};
	\item conjugate gradient methods: CGDY\cite{CGDY}, CGOPT\cite{CGOPT}.
\end{itemize}
The parameters in the compared methods are the same as in the corresponding references.
For all the tested methods, the maximum iteration number is set as $20,000$. If the stopping condition for the method is not attained within $20,000$ iterations, or it encounters ``Inf'' or ``NaN'', the method is treated as unsolved. 
The performance profile \cite{Dolan2002} is used here for the algorithm comparison, with different metrics. The vertical axis illustrates the proportion of problems that the method solves within a factor $\tau$ of the most effective method's metric, where $\log_2\tau$ is its horizontal axis.

\begin{figure}[!htb]
	\centering
	\includegraphics[width=\textwidth]{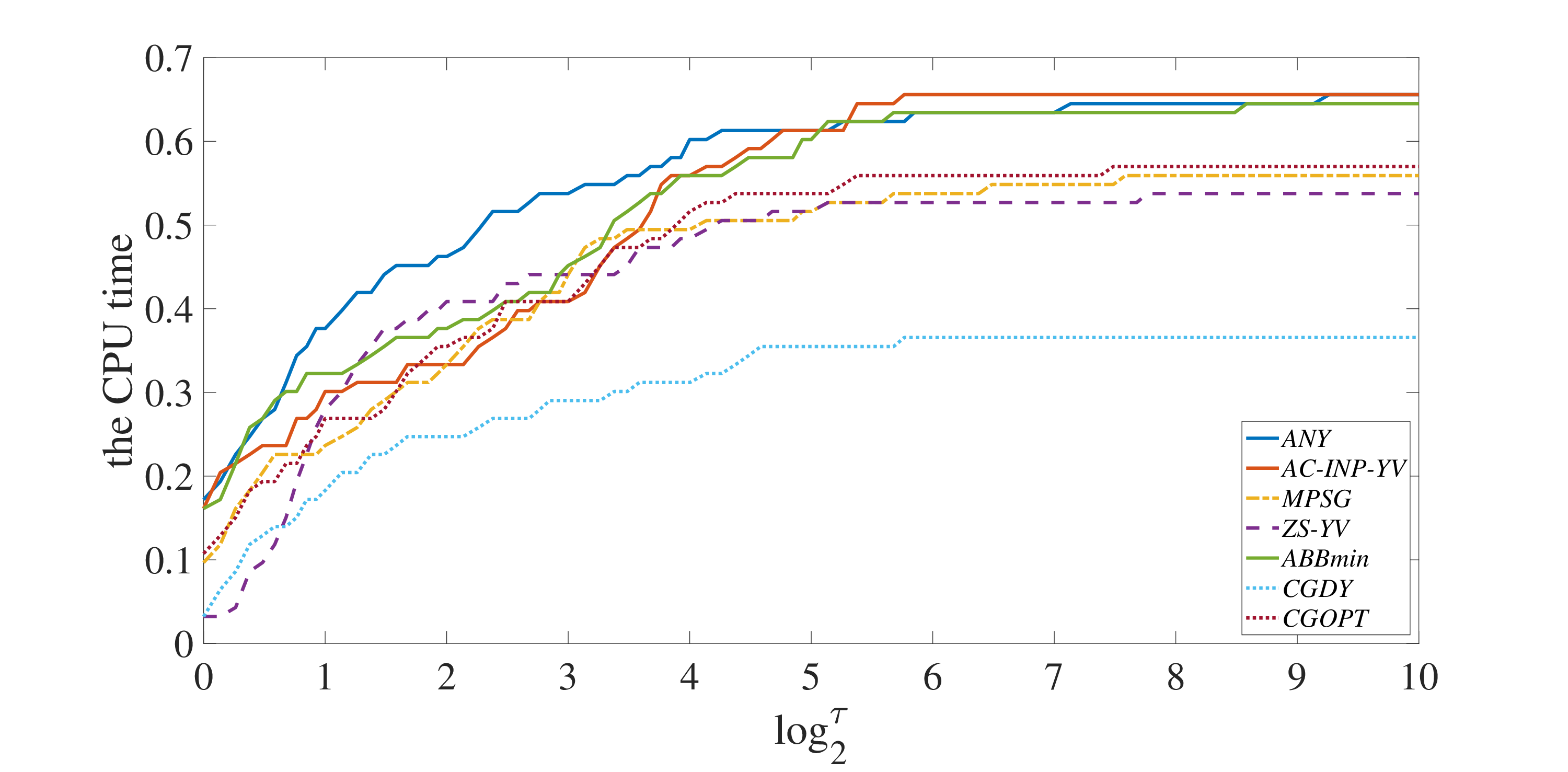}%
	\vskip -1mm
	\caption{Performance profile on the CPU time metric}
	\label{ANY_t}
\end{figure}
\begin{figure}[!htb]
	\centering
	\includegraphics[width=\textwidth]{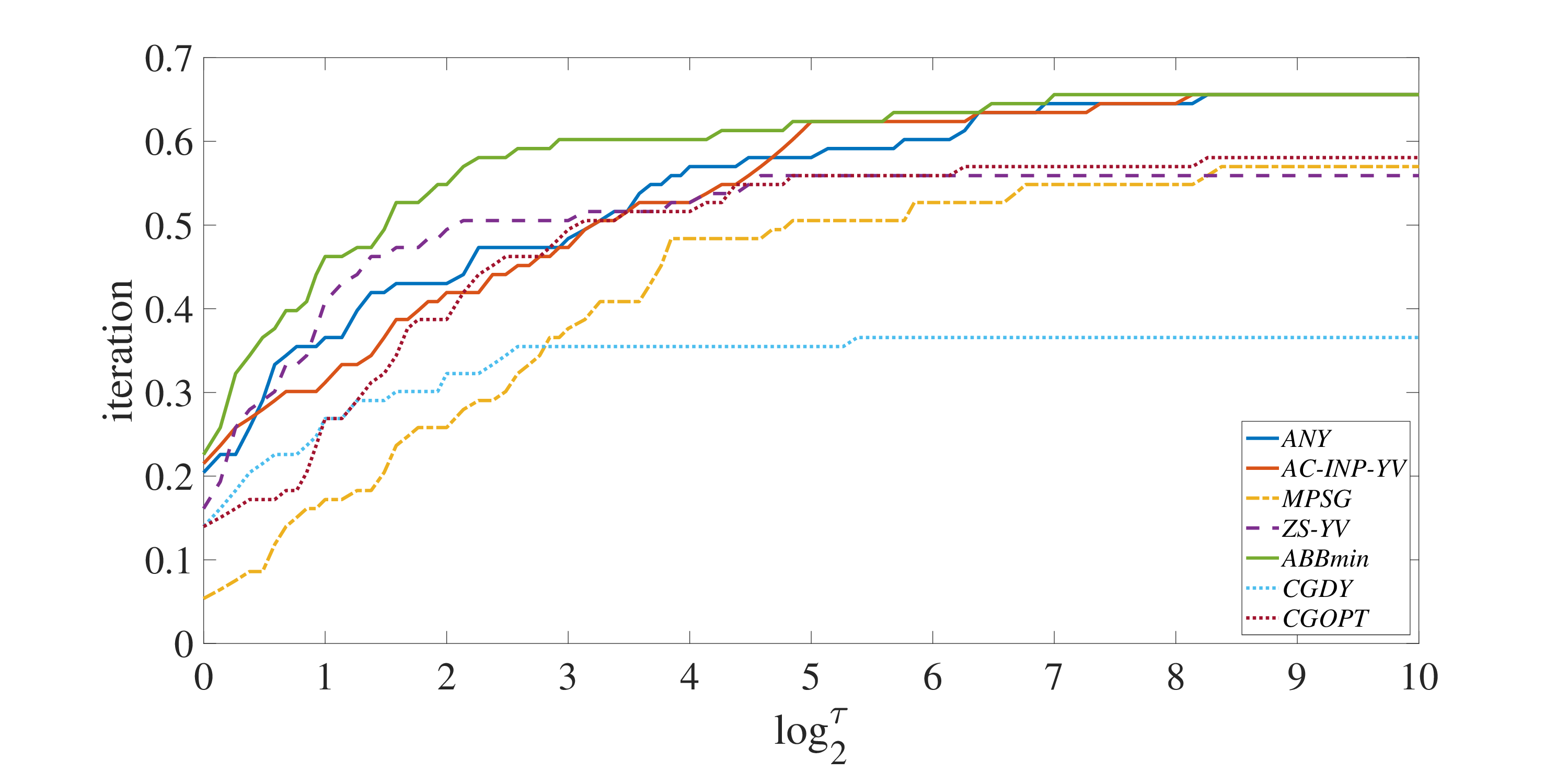}%
	\vskip -1mm
	\caption{Performance profile on the iteration metric}
	\label{ANY_k}
\end{figure}

The performance profiles for all the compared methods regarding the CPU time and the number of iterations are plotted in Fig. \ref{ANY_t} and Fig \ref{ANY_k}, respectively. 
The proposed method is able to solve about $68\%$ test problems, which is the best among all compared methods. Consider the consumed CPU time, and we can see that the proposed method performs the best while ABBmin requires the least iteration number. This shows that our proposed method enjoys less computational cost averagely in each iteration compared to the benchmark methods.
\begin{figure}[!htb]
	\centering
	\includegraphics[width=\textwidth]{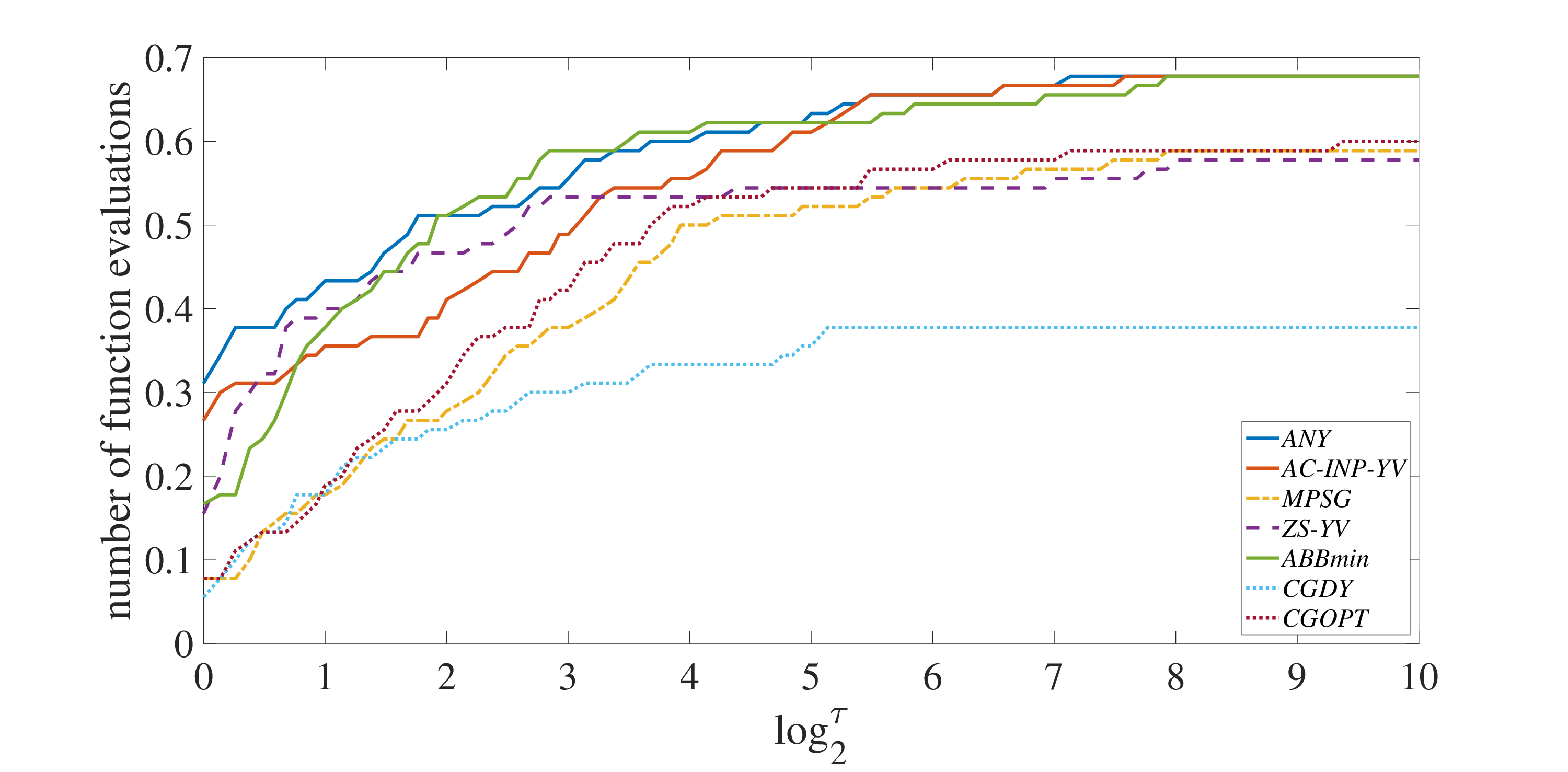}%
	\vskip -1mm
	\caption{Performance profile on the number of function value metric}
	\label{ANY_fi}
\end{figure}
\begin{figure}[!htb]
	\centering
	\includegraphics[width=\textwidth]{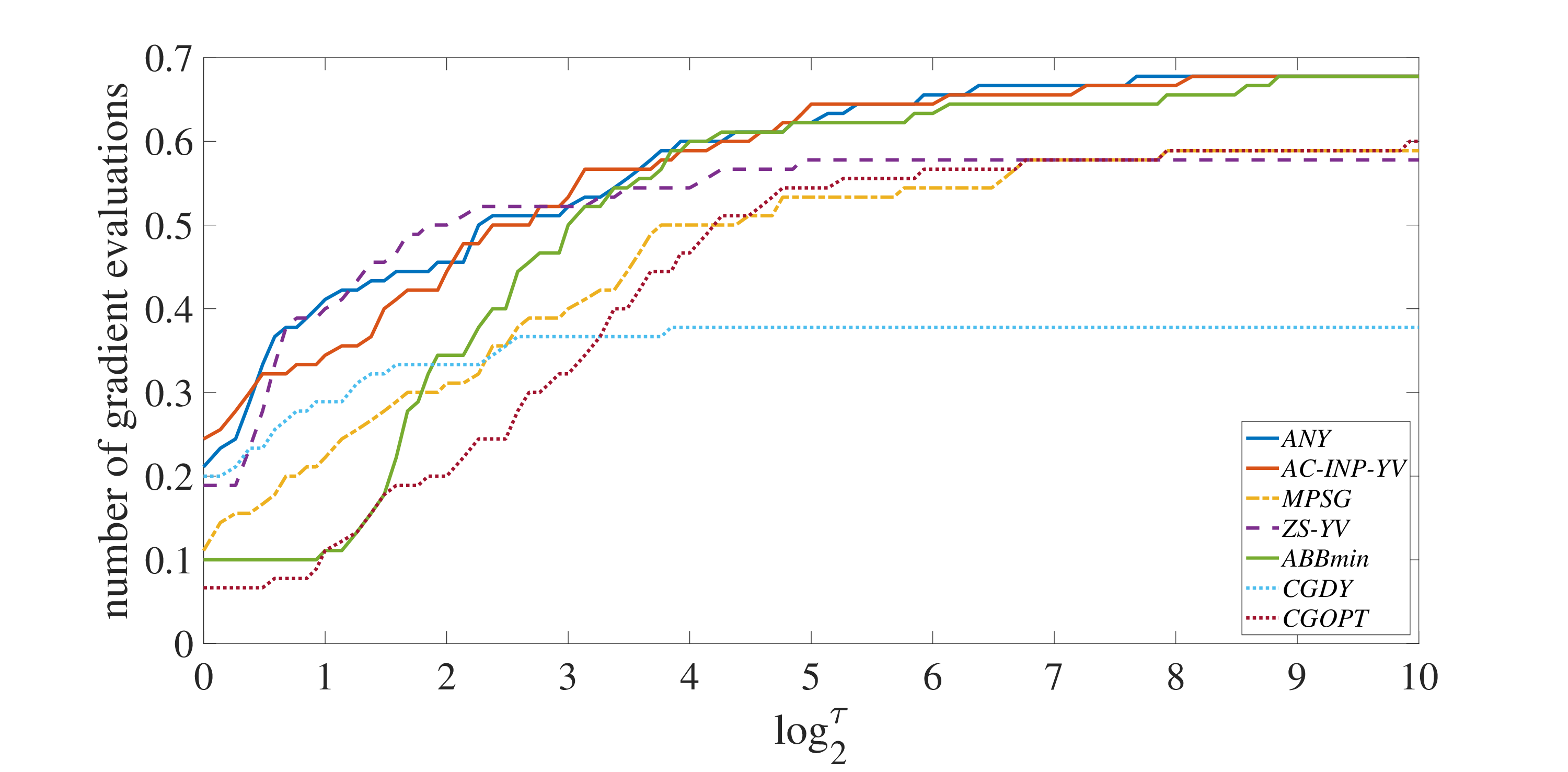}%
	\vskip -1mm
	\caption{Performance profile on the number of gradient value metric}
	\label{ANY_gi}
\end{figure}

The performance profiles for the number of function value and gradient evaluations are depicted in Fig. \ref{ANY_fi} and \ref{ANY_gi}, respectively. From them we can also observe the efficiency of the cyclic framework of the proposed method, which requires almost the least number of function value and gradient evaluations. 

Good performances of the proposed method are observed through the experiments. In the next, we will further analyze the proposed method, and show the possible reason for its good performance.

\subsection{Line search analysis}
For the optimization of general problem (\ref{f1.1}), it usually requires line search technique, so that the convergence of the method is guaranteed. We use backtracking to satisfy the line search requirement. However, frequent backtracking causes additional function evaluations, which are often the computational bottleneck in large scale optimization. In this part, the line search efficiency of the compared methods are analyzed.

We introduce the metric ``average line search counts per iteration'', defined as the average number of additional function evaluations required per iteration for the line search condition satisfaction. Less number of backtracking indicates that the trial stepsizes provided by the algorithm are effective, which satisfy the line search condition easily.

Here we consider the following methods which perform well in the previous tests, and their corresponding line search techniques as in the references:
\begin{itemize}
	\item \textbf{ANY}: the proposed method with the improved GLL line search \cite{HGLL};
	\item \textbf{AC-INP-YV\cite{ACINP}}: the cyclic gradient method based on interpolation, with the Dai-Zhang adaptive nonmonotone line search\cite{Dai2001ATSG};
	\item \textbf{ZS-YV\cite{Zhangya2022}}: the cyclic gradient method based on step-ahead BB approximation, with the GLL nonmonotone line search \cite{GLL1986};
	\item \textbf{ABBmin\cite{ABBmin19}}: the improved BB gradient method with the GLL nonmonotone line search \cite{GLL1986}.
\end{itemize}
Among all the test problems, 40 of them can be solved successfully by all the aforementioned four algorithms. The average number of extra line search trial stepsizes per iteration of the four methods for each test problem is plotted in Fig. \ref{fig:ls_comparison}. The logarithm vertical axis is adopted, so that the comparison is clear. The red, blue, green and yellow bars represent the four methods respectively. The vanished bar in some test problems means that there is no extra line search trial stepsize for the corresponding method (initial trial stepsize is accepted immediately). 
\begin{figure}[!htb]
	\centering
	\includegraphics[width=\textwidth]{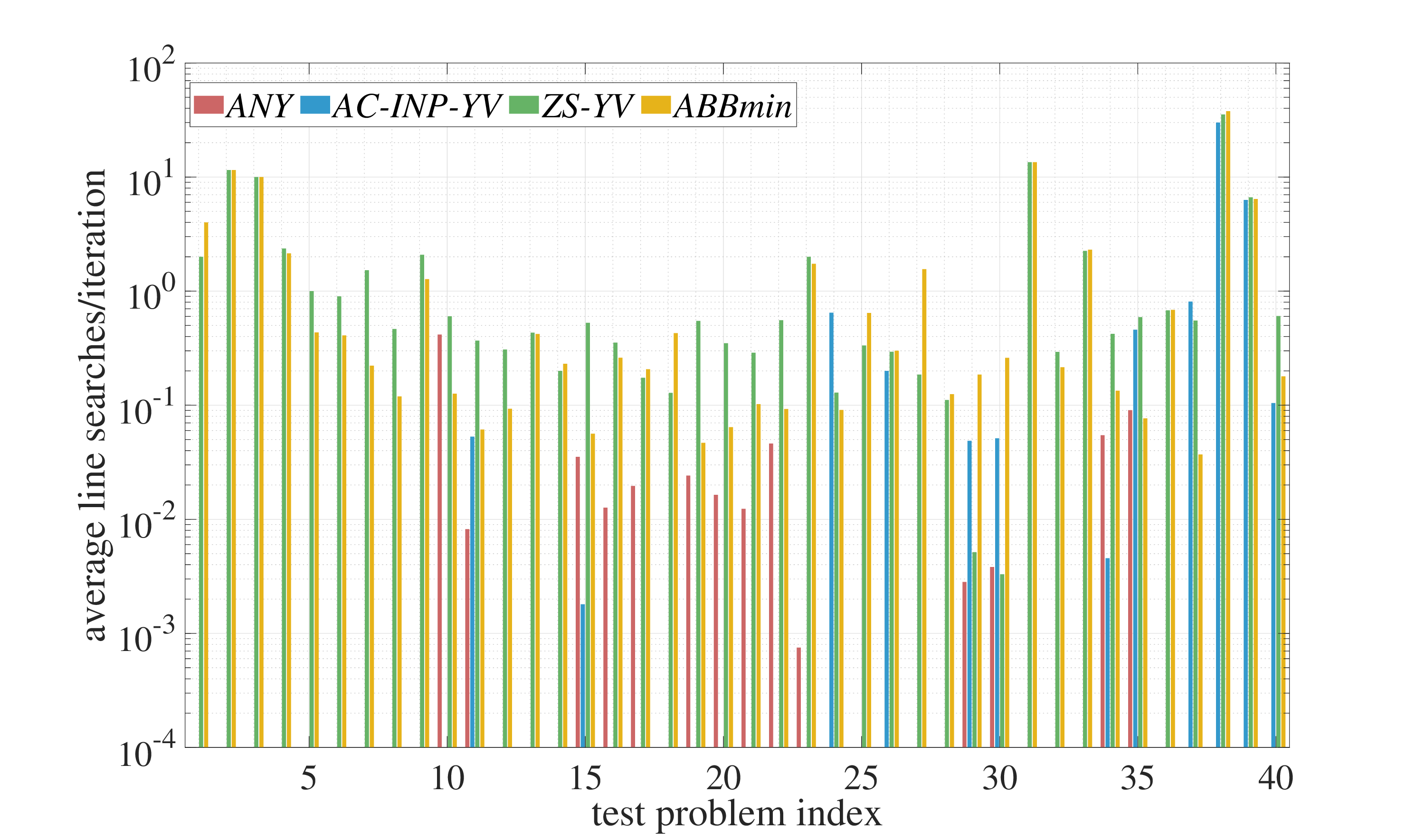}%
	\vskip -1mm
	\caption{Comparison of average line search counts per iteration.}
	\label{fig:ls_comparison}
\end{figure}

As observed in Figure \ref{fig:ls_comparison}, the red bars are usually shorter than the other bars. This shows that the proposed ANY method provides effective stepsizes, which require few line search trial stepsizes.

In detail, the average line search trial stepsize per iteration for each test problem is only $0.0186$, considering the proposed ANY method. This value becomes $0.9669$ for AC-INP-YV,  $2.5146$ for ZS-YV and $2.4632$ for ABBmin. Statistical results imply that the initial trial stepsize generated by the ANY method is accepted immediately in more than $98\%$ of the iterations. This provides strong evidence that the proposed method works well and requires less computational cost than the benchmark methods.

\begin{remark}
	Here the compared methods adopt different line searches, which keep the same as in their references. Usually the applied line search technique is proper for the corresponding method. For example, ABBmin uses the GLL line search\cite{GLL1986}. If we replace it by the improved GLL\cite{HGLL}, the  average line search trial stepsize per iteration for each test problem would become $6.8620$, which is worse than ABBmin method with traditional GLL.
\end{remark}

Through the analysis, it is observed that the proposed stepsize update strategy is effective. Most stepsizes can satisfy the line search condition naturally.

\subsection{High dimensional tests}

In this subsection, the test on high dimensional problems is shown. 

$9$ test problems are considered, with dimensions $10^5$ and $10^6$. Problems $1$-$3$ are strictly convex quadratic functions in the form of \eqref{quadraticf} with $\kappa=10^6$ and $\boldsymbol{H} = \text{diag}(\lambda_1, \cdots, \lambda_n)$. Problems $4-9$ are general unconstrained problem. Their detailed formulas are as follows:

\begin{itemize}
	\item \textbf{Problem 1 \cite{Zhou2006}:} $\lambda_1 = 0.1$ and $\lambda_i = i$ for $i > 1$, with $\boldsymbol{b} = (1,\dots,1)^T$ and $\boldsymbol{x}_0 = \mathbf{0}$.
	
	\item \textbf{Problem 2 \cite{Sun2020}:} $\lambda_i\in[1, 1 + 0.2(\kappa-1)], i=1, \cdots, \frac{n}{2}$ and $\lambda_i\in[0.8\kappa, \kappa]i=\frac{n}{2}+1, \cdots, n$, with $\boldsymbol{b} = \mathbf{0}$ and choose $\boldsymbol{x}_0$ on the unit sphere.
	
	\item \textbf{Problem 3 \cite{Sun2020}:} $\lambda_i = \frac{\kappa}{2} [\cos(\frac{n-i}{n-1}\pi) + 1]$ with $\boldsymbol{b} = \mathbf{0}$ and choose $\boldsymbol{x}_0$ on the unit sphere.
	
	\item \textbf{Problem 4 (BROYDN3D \cite{CUTEst}):} 
	\[ f(\boldsymbol{x}) = \sum_{i=1}^n \left( (3 - 2\boldsymbol{x}_i)\boldsymbol{x}_i - \boldsymbol{x}_{i-1} - 2\boldsymbol{x}_{i+1} + 1 \right)^2, \]
	where $\boldsymbol{x}_0 = (-1, \cdots, -1)^T$.
	
	\item \textbf{Problem 5 (COSINE \cite{CUTEst}):} 
	\[ f(\boldsymbol{x}) = \sum_{i=1}^{n-1} \cos(\boldsymbol{x}_i^2 - 0.5\boldsymbol{x}_{i+1}). \]
	where $\boldsymbol{x}_0 = (1, \cdots, 1)^T$.
	
	\item \textbf{Problem 6 (DIXMAANJ \cite{CUTEst}):} 
	\[ f(\boldsymbol{x}) = 1 + \alpha \sum_{i=1}^n \left(\tfrac{i}{n}\right)^{2} \boldsymbol{x}_i^2 + \beta \sum_{i=1}^{n-1} \boldsymbol{x}_i^2 (\boldsymbol{x}_{i+1} + \boldsymbol{x}_{i+1}^2)^2 + \gamma \sum_{i=1}^{2m} \boldsymbol{x}_i^2 \boldsymbol{x}_{i+m}^4 + \delta \sum_{i=1}^m \left(\tfrac{i}{n}\right)^{2} \boldsymbol{x}_i \boldsymbol{x}_{i+2m}, \]
	where $\alpha=1, \beta=\gamma=\delta=0.0625$ and $\boldsymbol{x}_0 = (2, \cdots, 2)^T$.
	
	\item \textbf{Problem 7 (ENGVAL1 \cite{CUTEst}):} 
	\[ f(\boldsymbol{x}) = \sum_{i=1}^{n-1} \left[ (\boldsymbol{x}_i^2 + \boldsymbol{x}_{i+1}^2)^2 - 4\boldsymbol{x}_i + 3 \right], \]
	where $\boldsymbol{x}_0 = (2, \cdots, 2)^T$.
	
	\item \textbf{Problem 8 (FIROSE \cite{Li1988}):} The objective is a sum of squares $f(\boldsymbol{x}) = \sum_{i=1}^n F_i^2(\boldsymbol{x})$, where the dominant term is:
	\[ F_i = 8\boldsymbol{x}_i(\boldsymbol{x}_i^2 - \boldsymbol{x}_{i-1}) - 2(1-\boldsymbol{x}_i) + 4(\boldsymbol{x}_i - \boldsymbol{x}_{i+1}^2) + \boldsymbol{x}_{i-1}^2 - \boldsymbol{x}_{i-2} + \boldsymbol{x}_{i+1} - \boldsymbol{x}_{i+2}^2, \]
	where $\boldsymbol{x}_0 = (-1, \cdots, -1)^T$.
	
	\item \textbf{Problem 9 (TRIROSE2 \cite{Li1988}):} 
	\[ 
	\begin{aligned}
		f(\boldsymbol{x}) = & \, 16(\boldsymbol{x}_1 - \boldsymbol{x}_2^2)^2 + \sum_{i=2}^{n-1} \left[ 8\boldsymbol{x}_i(\boldsymbol{x}_i^2 - \boldsymbol{x}_{i-1}) - 2(1-\boldsymbol{x}_i) + 4(\boldsymbol{x}_i - \boldsymbol{x}_{i+1}^2) \right]^2 \\
		& + \left[ 8\boldsymbol{x}_n(\boldsymbol{x}_n^2 - \boldsymbol{x}_{n-1}) - 2(1-\boldsymbol{x}_n) \right]^2,
	\end{aligned}
	\]
	where $\boldsymbol{x}_0 = (-1, \cdots, -1)^T$.
	
\end{itemize}
The following methods are tested and compared:
\begin{itemize}
	\item[1.] proposed: Algorithm \ref{algNY} for quadratic case, Algorithm \ref{algANY} for nonquadratic case;
	\item[2.] BB type: ABBmin\cite{ABBmin19}, MPSG\cite{Huang2022};
	\item[3.] SL type cyclic gradient methods: SL method\cite{Sun2020} for quadratic case, ZS-YV method\cite{Zhangya2022} and AC-INP-YV method\cite{ACINP} for nonquadratic case.
\end{itemize}
The computation results are summarized in Table \ref{tab:high_dim_comparison}. The column ``Iter'' denotes the number of iterations required by the corresponding algorithm for the certain problem, and ``Time'' represents CPU time in seconds. The symbol ``-'' indicates that the algorithm failed to converge within $20,000$ iterations, or that it terminates with abnormal situations like encountering ``Inf'' or ``NaN'' values.

Generally the proposed methods have best performances both in computational time and iteration number. It solves all the tested problems successfully. Especially for test problem 1, the proposed NY method solves it successfully while all the other compared methods fail. The sharp comparison is due to the high condition number ($10^6$ and $10^7$) of the problem. This indicates that our proposed methods not only work well for high dimensional problems, but also has the ability to deal with ill-conditioned problems.

\begin{table}[htbp]
	\centering
	\small 
	\setlength{\tabcolsep}{3pt} 
	
	\caption{high dimensional problem tests}
	\label{tab:high_dim_comparison}
	
	\begin{tabular}{|c|c|c|c|c|c|c|c|}
		\hline
		\textbf{Prob.} & \textbf{Dim.} & \textbf{Metric} & \textbf{ANY} & \textbf{ABBmin} & \textbf{MPSG} & \textbf{ZS-YV} & \textbf{AC-INP-YV} \\ \noalign{\hrule height 1.2pt}
		
		1 & $10^5$ & Iter & \textbf{8838} & - & - & \multicolumn{2}{c|}{-} \\ \cline{3-8} 
		&        & Time & \textbf{7.07} & - & - & \multicolumn{2}{c|}{-} \\ \cline{2-8} 
		& $10^6$ & Iter & \textbf{13199} & - & - & \multicolumn{2}{c|}{-} \\ \cline{3-8} 
		&        & Time & \textbf{65.30} & - & - & \multicolumn{2}{c|}{-} \\ \noalign{\hrule height 1.2pt}
		
		2 & $10^5$ & Iter & 22 & \textbf{19} & 20 & \multicolumn{2}{c|}{25} \\ \cline{3-8} 
		&        & Time & 0.013 & \textbf{0.012} & \textbf{0.012} & \multicolumn{2}{c|}{0.014} \\ \cline{2-8} 
		& $10^6$ & Iter & 22 & \textbf{19} & 20 & \multicolumn{2}{c|}{25.4} \\ \cline{3-8} 
		&        & Time & \textbf{0.133} & 0.155 & 0.446 & \multicolumn{2}{c|}{0.160} \\ \noalign{\hrule height 1.2pt}
		
		3 & $10^5$ & Iter & \textbf{229} & 257.4 & 755.2 & \multicolumn{2}{c|}{308} \\ \cline{3-8} 
		&        & Time & \textbf{0.041} & 0.044 & 0.045 & \multicolumn{2}{c|}{0.188} \\ \cline{2-8} 
		& $10^6$ & Iter & \textbf{225} & 258.2 & 825.2 & \multicolumn{2}{c|}{345.2} \\ \cline{3-8} 
		&        & Time & \textbf{1.14} & 1.86 & 91.89 & \multicolumn{2}{c|}{2.16} \\ \noalign{\hrule height 1.2pt}
		
		4 & $10^5$ & Iter & \textbf{24} & - & - & 79 & - \\ \cline{3-8} 
		&        & Time & \textbf{0.09} & - & - & 0.50 & - \\ \cline{2-8} 
		& $10^6$ & Iter & \textbf{21} & 1844 & 186 & 74 & - \\ \cline{3-8} 
		&        & Time & \textbf{0.46} & 289 & 5.02 & 3.36 & - \\ \noalign{\hrule height 1.2pt}
		
		5 & $10^5$ & Iter & 21 & 54 & - & - & \textbf{14} \\ \cline{3-8} 
		&        & Time & \textbf{0.12} & 8.17 & - & - & 0.17 \\ \cline{2-8} 
		& $10^6$ & Iter & 20 & 256 & - & - & \textbf{17} \\ \cline{3-8} 
		&        & Time & \textbf{0.86} & 858 & - & - & 1.69 \\ \noalign{\hrule height 1.2pt}
		
		6 & $10^5$ & Iter & \textbf{66} & 406 & 11558 & 69 & 73 \\ \cline{3-8} 
		&        & Time & \textbf{1.56} & 15.1 & 229 & 2.97 & 1.64 \\ \cline{2-8} 
		& $10^6$ & Iter & \textbf{66} & 356 & 4109 & 118 & 117 \\ \cline{3-8} 
		&        & Time & \textbf{15.5} & 128 & 687 & 47.5 & 25.0 \\ \noalign{\hrule height 1.2pt}
		
		7 & $10^5$ & Iter & 28 & \textbf{12} & 93 & 19 & 29 \\ \cline{3-8} 
		&        & Time & 0.08 & \textbf{0.05} & 0.26 & 0.08 & 0.06 \\ \cline{2-8} 
		& $10^6$ & Iter & 24 & \textbf{12} & 99 & 17 & 19 \\ \cline{3-8} 
		&        & Time & 0.51 & \textbf{0.47} & 2.13 & 0.55 & 0.77 \\ \noalign{\hrule height 1.2pt}
		
		8 & $10^5$ & Iter & 174 & 250 & - & \textbf{102} & - \\ \cline{3-8} 
		&        & Time & \textbf{0.71} & 2.00 & - & 0.91 & - \\ \cline{2-8} 
		& $10^6$ & Iter & \textbf{116} & 3746 & - & - & - \\ \cline{3-8} 
		&        & Time & \textbf{3.86} & 801 & - & - & - \\ \noalign{\hrule height 1.2pt}
		
		9 & $10^5$ & Iter & \textbf{137} & 234 & - & - & 302 \\ \cline{3-8} 
		&        & Time & \textbf{0.46} & 1.28 & - & - & 0.88 \\ \cline{2-8} 
		& $10^6$ & Iter & \textbf{93} & - & 175 & - & 218 \\ \cline{3-8} 
		&        & Time & \textbf{2.24} & - & 4.76 & - & 5.09 \\ \noalign{\hrule height 1.2pt}
	\end{tabular}
\end{table}
\newpage		
\section{Conclusion}\label{section5}
A new type of stepsize for gradient method is proposed, which is deduced through the aim of $5$ step termination for $3$ dimensional quadratic function minimization problems. The proposed NY stepsize is combined with the cyclic update strategy, and a new gradient method is proposed. It is proved to have $3$ dimensional quadratic termination property. The proposed gradient method is extended for solving general unconstrained problem, through the quadratic interpolation approximation for Cauchy step. By integrating the improved GLL nonmonotone line search, we \nobreak establish the global convergence of the proposed method. Furthermore, we analyze its sublinear \nobreak convergence rate for convex problems and $R$-linear convergence rate for problems with quadratic \nobreak functional growth property. Numerical results show that our proposed algorithm has the best performances in terms of the computational time compared to the benchmark methods. The proposed method also provides good initial trials for line search, which leads to little computational cost for line search.

	\appendix
	\section{The Improved GLL Line Search}
	\label{app:linesearch}
	
	In this appendix, the detailed implementation of the improved GLL line search \cite{HGLL} used in the ANY algorithm (Algorithm \ref{algANY}) is provided. It integrates the GLL nonmonotone line search with quadratic interpolation backtracking. Unlike the traditional approach, the new trial stepsize is computed using the local information of the current iteration to capture the local curvature accurately. The brief algorithm framework is summarized in Algorithm \ref{alg:improved_gll}.
	
\begin{algorithm}[H]
	\caption{The improved GLL line search}
	\label{alg:improved_gll}
	\textbf{Input:} Current iterate $\mathbf{x}_k$, gradient $\mathbf{g}_k$, function value $f(\mathbf{x}_k)$, initial stepsize $\alpha$, history length $M$, parameter $\delta$, max line search iterations $L_{\max}$  \\
	\textbf{Output:} Accepted stepsize $\alpha_k$ and new iterate $\mathbf{x}_{k+1}$.
	\begin{tabbing}
		\hspace{1.5em} \= \hspace{1.5em} \= \hspace{1.5em} \= \kill
		Let $f_{\text{ref}} = \max_{0 \le j \le \min\{k, M\}} f(\mathbf{x}_{k-j})$; \\
		Let $iter_{ls} = 0$;\\
		\textbf{while} $iter_{ls} < L_{max}$ \textbf{do} \\
		\> $iter_{ls} = iter_{ls} + 1$;\\
		\> Let $\mathbf{x}_{\text{new}} = \mathbf{x}_k - \alpha \mathbf{g}_k$ and $f_{\text{new}} = f(\mathbf{x}_{\text{new}})$; \\
		
		\> \textbf{if} $f_{\text{new}} \le f_{\text{ref}} - \delta \alpha \Vert\mathbf{g}_k\Vert^2$ \textbf{then} \\
		\> \> Let $\alpha_k = \alpha, \mathbf{x}_{k+1} = \mathbf{x}_{\text{new}}$; \\
		\> \> \textbf{return} \\
		
		\> \textbf{else} \\
		\> \> Compute $\bar{\alpha} = \frac{\Vert\mathbf{g}_k\Vert^2\alpha^2}{2 [f(\mathbf{x}_k - \alpha \mathbf{g}_k) - f(\mathbf{x}_k) + \alpha \Vert\mathbf{g}_k\Vert^2]}$; \\
		
		\> \> \textbf{if} $\bar{\alpha} \in [0.1\alpha, 0.9\alpha]$ \textbf{then} \\
		\> \> \> Let $\alpha = \bar{\alpha}$; \\
		\> \> \textbf{else} \\
		\> \> \> Let $\alpha = 0.5\alpha$; \\
		\> \> \textbf{end if} \\
		\> \textbf{end if} \\
		\textbf{end while}
	\end{tabbing}
\end{algorithm}
	

\end{document}